\newcommand{\A}{\mathbf{A}}
\newcommand{\NA}{N\!\mathbf{A}}
\newcommand{\sSets}{\mathbf{sSets}}
\theoremstyle{plain}
\newtheorem{theorem}{Theorem}[section]
\newtheorem{theorema}{Theorem}
\newtheorem{lemma}[theorem]{Lemma}
\newtheorem{proposition}[theorem]{Proposition}
\newtheorem{propositiona}[theorema]{Proposition}
\newtheorem*{proposition*}{Proposition}
\newtheorem*{corollary*}{Corollary}
\theoremstyle{definition}
\newtheorem{remark}[theorem]{Remark}
\begin{document}

\title{Left fibrations and homotopy colimits II}

\author[Gijs Heuts]{Gijs Heuts}
\address{University of Copenhagen, Department of Mathematical Sciences, Universitetsparken 5, DK-2100 Copenhagen, Denmark}
\email{gheuts@math.ku.dk}

\author[Ieke Moerdijk]{Ieke Moerdijk}
\address{Radboud Universiteit Nijmegen, Institute for Mathematics, Astrophysics and Particle Physics, Heyendaalseweg 135, 6525 AJ Nijmegen, The Netherlands}
\email{i.moerdijk@math.ru.nl}

\date{}

\begin{abstract}
For a small simplicial category $\A$, we prove that the homotopy colimit functor from the category of simplicial diagrams on $\A$ to the category of simplicial sets over the homotopy-coherent nerve of $\A$ provides a left Quillen equivalence between the projective model structure on the former category and the covariant model structure on the latter. We compare this Quillen equivalence to the straightening-unstraightening equivalence previously established by Lurie, where the left adjoint goes in the opposite direction. The existence of left Quillen functors in both directions considerably simplifies the proof that these constructions provide Quillen equivalences. The results of this paper generalize those of part I, where $\A$ was an ordinary category. The proofs for a simplicial category are more involved and can be read independently.
\end{abstract}

\maketitle


\section{Introduction and main results}

For the category $\mathbf{sSets}$ of simplicial sets and the category $\mathbf{sCat}$ of simplicially enriched categories (usually more briefly referred to as \emph{simplicial categories}), there is an adjoint pair
\[
\xymatrix{
w_!: \mathbf{sSets} \ar@<.5ex>[r] & \mathbf{sCat}: w^* \ar@<.5ex>[l]
}
\]
which is known to be a Quillen equivalence for the Joyal model structure on the former and the Bergner model structure on the latter category \cite{cisinskimoerdijk3, joyalcomparison, htt}. We will recall the definition of this pair in Section \ref{sec:review}; the functor $w_!$ is denoted $\mathfrak{C}$ in \cite{htt}, where the functor $w^*$ is written $N$. The latter functor is usually referred to as the \emph{homotopy-coherent nerve}. Our notation is inspired by the $W$-construction of Boardman and Vogt \cite{boardmanvogt}. 

For a simplicial category $\mathbf{A}$, we will define a functor 
\begin{equation*}
h_!: \mathbf{sSets}^{\mathbf{A}} \longrightarrow \mathbf{sSets}/w^*\mathbf{A}.
\end{equation*}
Here $\mathbf{sSets}^{\mathbf{A}}$ denotes the category of simplicially enriched functors from $\mathbf{A}$ to $\mathbf{sSets}$ and $\mathbf{sSets}/w^*\mathbf{A}$ the slice category of simplicial sets over $w^*\mathbf{A}$. We will consider the projective model structure on the first and the covariant model structure on the second category, which we recall in Section \ref{sec:review}. The functor $h_!$ will be a simplicial left Kan extension of a simplicial functor
\begin{equation*}
h: \mathbf{A}^{\mathrm{op}} \longrightarrow \mathbf{sSets}/w^*\mathbf{A}.
\end{equation*}
On objects it satisfies
\begin{equation*}
h(a) = a/w^*\mathbf{A},
\end{equation*}
where for a general simplicial set $X$ and vertex $a \in X_0$, the simplicial set $a/X$ has as its $n$-simplices the $(n+1)$-simplices of $X$ whose 0'th vertex is $a$, with the evident simplicial structure. Our first result is the following:

\begin{propositiona}
\label{prop:h!}
The functor $h_!$ is part of an adjoint pair
\[
\xymatrix{
h_!: \mathbf{sSets}^{\mathbf{A}} \ar@<.5ex>[r] & \mathbf{sSets}/w^*\mathbf{A}: h^* \ar@<.5ex>[l]
}
\]
which is in fact a Quillen adjunction. Furthermore, the composition of $h_!$ with the forgetful functor $\mathbf{sSets}/w^*\mathbf{A} \rightarrow \mathbf{sSets}$ is equivalent to the homotopy colimit functor
\begin{equation*}
\mathrm{hocolim}_{\mathbf{A}}: \mathbf{sSets}^{\mathbf{A}} \longrightarrow \mathbf{sSets}.
\end{equation*}
\end{propositiona}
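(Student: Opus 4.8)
The plan is to establish the three assertions in turn; only the last requires genuine work.

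Existence of the adjoint pair is an instance of the enriched nerve--realization paradigm. The slice $\mathbf{sSets}/w^*\mathbf{A}$ is a cocomplete, tensored and cotensored simplicial category, so the simplicial left Kan extension of the simplicial functor $h$ along the simplicial Yoneda embedding $\mathbf{A}^{\mathrm{op}}\to\mathbf{sSets}^{\mathbf{A}}$, $a\mapsto\mathbf{A}(a,-)$, exists and is computed by the weighted colimit
\[
h_!(F)\;=\;\int^{a\in\mathbf{A}}F(a)\otimes h(a),
\]
where $\otimes$ denotes the tensoring over $\mathbf{sSets}$. Combining the tensor--cotensor adjunction with the calculus of ends and coends then produces the right adjoint, given objectwise by $h^*(Z)(a)=\mathrm{Map}_{w^*\mathbf{A}}\bigl(h(a),Z\bigr)$, the simplicial mapping space in the slice. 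This part is formal.

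For the Quillen property I would argue on the right adjoint, because checking $h_!$ on the generating projective trivial cofibrations meets an obstacle: the outer horn $\Lambda^n_n\hookrightarrow\Delta^n$ is sent to $\Lambda^n_n\times h(a)\hookrightarrow\Delta^n\times h(a)$, which is a monomorphism but not visibly left anodyne and so not visibly a covariant trivial cofibration. Instead, recall that the covariant model structure on $\mathbf{sSets}/w^*\mathbf{A}$ is a simplicial model category in which every object is cofibrant. The pushout--product axiom applied to the cofibration $\emptyset\to h(a)$ and an arbitrary (trivial) covariant fibration $Z\to Z'$ shows that $\mathrm{Map}_{w^*\mathbf{A}}(h(a),Z)\to\mathrm{Map}_{w^*\mathbf{A}}(h(a),Z')$ is a (trivial) Kan fibration; as this is exactly $h^*(Z)(a)\to h^*(Z')(a)$ and projective fibrations are detected objectwise, $h^*$ is right Quillen. (One also sees directly that $h_!$ preserves cofibrations: a generating projective cofibration $\mathbf{A}(a,-)\otimes(\partial\Delta^n\hookrightarrow\Delta^n)$ goes to the monomorphism $\partial\Delta^n\times h(a)\hookrightarrow\Delta^n\times h(a)$ over $w^*\mathbf{A}$, and these are precisely the covariant cofibrations.)

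For the comparison with the homotopy colimit, let $P\colon\mathbf{A}^{\mathrm{op}}\to\mathbf{sSets}$ be the diagram $P(a)=a/w^*\mathbf{A}$ obtained from $h$ by composing with the forgetful functor $\mathbf{sSets}/w^*\mathbf{A}\to\mathbf{sSets}$. Since that forgetful functor is cocontinuous and preserves tensors, the coend formula gives $\mathrm{forget}\circ h_!\,(F)=\int^{a}F(a)\times P(a)=F\otimes_{\mathbf{A}}P$. It therefore suffices to show that $P$ is a projectively cofibrant resolution of the terminal diagram: (a) $P$ is cofibrant in the projective model structure on $\mathbf{sSets}^{\mathbf{A}^{\mathrm{op}}}$, and (b) each $a/w^*\mathbf{A}$ is weakly contractible, the latter because $s_0(a)$ is an initial vertex of the coslice. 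Granting (a) and (b), the functor $F\mapsto F\otimes_{\mathbf{A}}P$ carries objectwise weak equivalences to weak equivalences and, since $P\to\ast$ exhibits $P$ as a cofibrant resolution, it computes $F\otimes^{\mathbb{L}}_{\mathbf{A}}\ast=\mathrm{hocolim}_{\mathbf{A}}F$, naturally in $F$ (comparing $P$ with any other cofibrant resolution of $\ast$ yields a natural zig-zag of weak equivalences). I expect (a) to be the main obstacle: for a simplicial category $\mathbf{A}$ one must exploit the combinatorics of the homotopy-coherent nerve $w^*\mathbf{A}$ (a necklace filtration, or the cellular structure of the functor $\mathfrak{C}=w_!$) to exhibit an exhaustive filtration of $a/w^*\mathbf{A}$ whose successive stages attach cells of the form $\mathbf{A}(-,b)\otimes(\partial\Delta^n\hookrightarrow\Delta^n)$. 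This is precisely the point at which the argument is considerably more involved than in part I, where $w^*\mathbf{A}$ is the ordinary nerve of a category.
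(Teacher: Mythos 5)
Your treatment of the adjunction and of the Quillen property is essentially the paper's argument. One remark on the second part: the ``obstacle'' you see with the outer horn is not there. The tensoring of $\mathbf{sSets}/w^*\mathbf{A}$ is over the Kan--Quillen model structure on $\mathbf{sSets}$, and \emph{every} horn inclusion $\Lambda^n_k\to\Delta^n$, $0\le k\le n$, is a trivial cofibration there; since the covariant model structure is simplicial, the pushout--product of $\emptyset\to h(a)$ with $\Lambda^n_k\to\Delta^n$ is a covariant trivial cofibration, with no need to identify it as left anodyne. Your detour through the right adjoint and the mapping spaces $\mathrm{Map}_{w^*\mathbf{A}}(h(a),-)$ is the adjoint formulation of exactly the same pushout--product axiom, so it is correct, just not necessary.

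The third part, however, has a genuine gap, and you have named it yourself: you never prove assertion (a), the projective cofibrancy of the diagram $P\colon a\mapsto a/w^*\mathbf{A}$ in $\mathbf{sSets}^{\mathbf{A}^{\mathrm{op}}}$. For an ordinary category this diagram is the usual bar resolution of the point and cofibrancy is standard, but for a simplicial category $\mathbf{A}$ the coslice of the homotopy-coherent nerve does not come with an evident cell structure of the form $\mathbf{A}(-,b)\otimes(\partial\Delta^n\to\Delta^n)$; establishing one would require a necklace-type analysis that is itself comparable in difficulty to the proposition. Your argument is therefore incomplete as it stands. The paper avoids this entirely: having already shown that $h_!$ is left Quillen, the composite $U\circ h_!$ with the (left Quillen) forgetful functor preserves homotopy colimits, and it sends each representable $\mathbf{A}(a,-)$ to $a/w^*\mathbf{A}$, which is weakly contractible (it has an initial vertex $\mathrm{id}_a$, as you note). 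A homotopy-colimit-preserving functor on $\mathbf{sSets}^{\mathbf{A}}$ that sends representables to weakly contractible simplicial sets is determined up to natural weak equivalence, because every projectively cofibrant diagram is a homotopy colimit of representables; since $\mathrm{hocolim}_{\mathbf{A}}$ has the same two properties, the comparison follows with no cofibrancy claim about $P$ needed. I recommend you replace your step (a) by this characterization argument.
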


\begin{remark}
In part I of this paper \cite{leftfibrations} we defined a functor $h_!: \mathbf{sSets}^{\mathbf{A}} \rightarrow \mathbf{sSets}/\NA$ for an ordinary category $\mathbf{A}$ and its nerve $\NA$. For a discrete simplicial category $\mathbf{A}$, viewed as an ordinary category, the functor of Proposition \ref{prop:h!} coincides with that functor after identifying $\NA$ and $w^*\mathbf{A}$, which are indeed the same for \emph{discrete} simplicial categories.
\end{remark}

For a simplicial set $X$ there is an adjoint pair of functors
\[
\xymatrix{
r_!: \mathbf{sSets}/X \ar@<.5ex>[r] & \mathbf{sSets}^{w_! X}: r^*. \ar@<.5ex>[l]
}
\]
This is the pair which Lurie calls the \emph{straightening} and \emph{unstraightening} functors \cite{htt}. It can be described as follows. For a map of simplicial sets $p: Y \rightarrow X$, consider a pushout square
\[
\xymatrix{
Y \ar[d]_p \ar[r] & Y^\triangleleft \ar[d] \\
X \ar[r]_-i & Y(p).
}
\]
Here $Y^{\triangleleft}$ denotes the \emph{left cone} on $Y$, defined as the join $\Delta^0 \star Y$. If $-\infty$ denotes the image in $Y(p)$ of the cone vertex of this $Y^{\triangleleft}$, then $r_!(p)$ is defined by the formula
\begin{equation*}
r_!(p): w_! X \longrightarrow \mathbf{sSets}: x \longmapsto w_!Y(p)(-\infty, i(x)).
\end{equation*}
An equivalent description we will sometimes use is the following. For $x$ a vertex of $X$, and thus an object of $w_!X$, we have
\begin{equation*}
r_!(Y \rightarrow X)(x) = (w_!p)^\ast \bigl(w_!X(-, x)\bigr) \otimes_{w_!Y} w_!Y^{\triangleleft}(-\infty, -).
\end{equation*}
Here $(w_!p)^\ast \bigl(w_!X(-, x)\bigr)$ is interpreted as a right module over the simplicial category $w_!Y$ (i.e., a contravariant simplicial functor from it to $\mathbf{sSets}$), whereas $w_!Y^{\triangleleft}(-\infty,-)$ is a left module over $w_!Y$ (i.e., a covariant such functor). The following is proved in \cite{htt}; we will include a proof in Section \ref{sec:Qpairs}.

\begin{propositiona}
\label{prop:r!}
The adjoint pair $(r_!, r^*)$ is a Quillen adjunction for the covariant model structure on $\mathbf{sSets}/X$ and the projective model structure on $\mathbf{sSets}^{w_!X}$.
\end{propositiona}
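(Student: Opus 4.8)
The plan is to show that $r_!$ is a left Quillen functor by checking that it preserves cofibrations and weak equivalences. Since the cofibrations of the covariant model structure on $\mathbf{sSets}/X$ are exactly the monomorphisms, every object of $\mathbf{sSets}/X$ is cofibrant, and it is a general fact that a left adjoint out of a model category all of whose objects are cofibrant is a left Quillen functor as soon as it preserves cofibrations and weak equivalences (then it also preserves trivial cofibrations, being intersections of the two). Note that $r_!$ preserves all colimits: it is assembled from $w_!$, the left cone functor $Y\mapsto Y^\triangleleft=\Delta^0\star Y$, a pushout, and a two-sided bar construction, all of which are colimit-preserving; this is also where the adjunction with $r^*$ comes from. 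Recall finally that the covariant weak equivalences are the \emph{covariant equivalences}, characterized by the property that $\mathrm{Map}_X(-,Z)$ carries them to weak equivalences of simplicial sets for every left fibration $Z\to X$ (cf.\ \cite{htt}).

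For cofibrations, since $r_!$ preserves colimits it suffices to treat the generating cofibrations $(\partial\Delta^n\hookrightarrow\Delta^n,\sigma)$ over $X$, for all structure maps $\sigma\colon\Delta^n\to X$. One computes $r_!$ on these directly from the bar-construction formula, using that the left cone on $\Delta^n$ is $\Delta^{n+1}$, so that $w_!(\Delta^n)^\triangleleft=\mathfrak{C}[\Delta^{n+1}]$ is the standard simplicial category on $\{0,\dots,n+1\}$ with cubical mapping spaces, and using that $w_!=\mathfrak{C}$ is left Quillen from the Joyal to the Bergner model structure and hence carries monomorphisms to cofibrations of simplicial categories. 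An involved but essentially combinatorial analysis of the resulting pushout identifies $r_!(\partial\Delta^n\hookrightarrow\Delta^n,\sigma)$ as a projective cofibration in $\mathbf{sSets}^{w_!X}$; this is the simplicial analogue of the corresponding computation in part I. In particular $r_!$ sends every monomorphism over $X$ to a projective cofibration, so $r_!Y$ is projectively cofibrant for every object $Y\to X$.

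It remains to show that $r_!$ carries covariant equivalences to projective weak equivalences, which is the heart of the matter. First I would treat the left horn inclusions: for each $0\le i<n$ and each $\sigma\colon\Delta^n\to X$, a direct computation in the spirit of the previous paragraph, now comparing $(\Lambda^n_i)^\triangleleft\hookrightarrow(\Delta^n)^\triangleleft=\Delta^{n+1}$ together with the associated pushouts and cubical mapping spaces, shows that $r_!$ sends the inclusion $(\Lambda^n_i\hookrightarrow\Delta^n,\sigma)$ to a levelwise weak equivalence, hence—combined with the cofibration statement—to a projective trivial cofibration. Consequently, for any projectively fibrant $N\in\mathbf{sSets}^{w_!X}$, the adjunction $(r_!,r^*)$ shows that $r^*N\to X$ has the right lifting property against all of these horn inclusions over $X$, i.e.\ that $r^*N\to X$ is a left fibration. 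Now let $f\colon Y\to Y'$ be an arbitrary covariant equivalence over $X$. Since $r_!Y$ and $r_!Y'$ are projectively cofibrant, $r_!f$ is a projective weak equivalence if and only if $\mathrm{Map}_{\mathbf{sSets}^{w_!X}}(r_!Y',N)\to\mathrm{Map}_{\mathbf{sSets}^{w_!X}}(r_!Y,N)$ is a weak equivalence of simplicial sets for every projectively fibrant $N$; by the $\mathbf{sSets}$-enriched adjunction $(r_!,r^*)$ this map is identified with $\mathrm{Map}_X(Y',r^*N)\to\mathrm{Map}_X(Y,r^*N)$, which is a weak equivalence because $r^*N\to X$ is a left fibration and $f$ is a covariant equivalence. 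Therefore $r_!$ preserves covariant equivalences, and $(r_!,r^*)$ is a Quillen adjunction.

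The formal skeleton above is short; the substantial work lies in the explicit computations of $r_!$ on the generating cofibrations and on the left horn inclusions over $X$—expressing them in terms of $\mathfrak{C}$ applied to cones and the cubical structure of the mapping spaces of $\mathfrak{C}[\Delta^{n+1}]$, and verifying the (co)fibrancy and weak-equivalence assertions. I expect this to be the main obstacle: it is genuinely more delicate than in the ordinary-category setting of part I, where the corresponding mapping spaces are discrete rather than cubes, while everything else is formal once these computations are in place.
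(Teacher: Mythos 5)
Your formal skeleton is sound and, at the level of strategy, coincides with the paper's: everything reduces to computing $r_!$ on the boundary inclusions $\partial\Delta^n\to\Delta^n$ and the left horn inclusions $\Lambda^n_k\to\Delta^n$ ($0\le k<n$) over $X$, using the cubical mapping spaces of $w_!\Delta^{n+1}=w_!(\Delta^n)^{\triangleleft}$. (Your detour through left fibrations and the mapping-space characterization of covariant equivalences is correct but redundant: once the boundary inclusions go to projective cofibrations and the left horn inclusions over $X$ go to projective trivial cofibrations, the criterion recalled from part I already yields the Quillen adjunction, and preservation of all covariant equivalences then follows from Ken Brown's lemma since every object over $X$ is cofibrant.)

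The gap is that the two computations you defer to ``an involved but essentially combinatorial analysis'' are the entire content of the proof, and you give no indication of how they go or why they succeed. Concretely, what is needed is: (i) the reduction, via the natural isomorphism $(w_!f)_!\circ r_!\cong r_!\circ f_!$ and the fact that $(w_!f)_!$ is left Quillen, to the case $X=\Delta^n$ with structure map the identity, where the formula collapses to $r_!(\Delta^n\to\Delta^n)(m)=w_!\Delta^{n+1}(0,m+1)\cong\prod_{0\le l<m}I$; (ii) the identification of $r_!(\partial\Delta^n\hookrightarrow\Delta^n)$ as an isomorphism at every vertex $m<n$ and, at the final vertex, as the inclusion of the union of all the faces of the cube $\prod_{0\le m<n}I$ --- the face $\{-\}$ in position $k$ being contributed by $d_k\Delta^n$ for $k<n$, and all the $\{+\}$-faces by $d_n\Delta^n$ --- so that $r_!(i)$ is a pushout of $w_!\Delta^n(n,-)$ tensored with a monomorphism of simplicial sets, hence a projective cofibration; and (iii) for $\Lambda^n_k$ with $k<n$, the observation that the same union with the $\{-\}$-face in position $k$ omitted deformation retracts onto the $\{+\}$-face in position $k$, so that the inclusion into the (contractible) cube is a trivial cofibration. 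Point (iii) is also exactly where the restriction to left horns enters, and your write-up never engages with why $k<n$ matters; for $k=n$ the conclusion is false (already for $\Lambda^1_1=\{1\}\hookrightarrow\Delta^1$ one finds $r_!(\{1\})(0)=\emptyset\to r_!(\Delta^1)(0)=\Delta^0$). As it stands the proposal is a correct plan rather than a proof: supplying the identifications in (ii) and (iii) is precisely the substance of the argument.
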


\begin{remark}
For the length of this remark, write $\tilde{r}_!$ for the functor defined above. In part I of this paper we defined a left Quillen functor $r_!: \mathbf{sSets}/\NA \rightarrow \mathbf{sSets}^{\mathbf{A}}$ for $\mathbf{A}$ an ordinary category. Write $\varepsilon: w_! \NA \rightarrow \mathbf{A}$ for the counit of the adjunction $(w_!, w^*)$ evaluated on $\mathbf{A}$ (again using that here $w^*\mathbf{A} = \NA$). Using the techniques of Section \ref{sec:cubes} it is not difficult to define a weak equivalence $\varepsilon_! \circ \tilde{r}_! \rightarrow r_!$, so that our `new' $r_!$ is very much analogous to the one considered in part I.
\end{remark}

We will exploit the existence of both these Quillen pairs to prove our main result:

\begin{theorema}
\label{thm:main}
The Quillen pair $(r_!, r^*)$ is a Quillen equivalence. If $\mathbf{A}$ is a fibrant simplicial category, then the pair $(h_!,h^*)$ is a Quillen equivalence as well. \end{theorema}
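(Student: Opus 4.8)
The plan is to reduce both assertions to two natural weak equivalences between composites of the functors introduced above, after which everything follows by a short formal argument with derived functors; constructing those two comparisons will be the real work, and it is what Section~\ref{sec:cubes} is for.

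First, for every small simplicial category $\A$ I would construct a natural weak equivalence
\[
\varepsilon_! \circ r_! \circ h_! \;\xrightarrow{\ \sim\ }\; \mathrm{id}_{\sSets^{\A}},
\]
where $r_!$ is taken for the simplicial set $X = w^*\A$ and $\varepsilon_!\colon \sSets^{w_!w^*\A} \to \sSets^{\A}$ denotes left Kan extension along the counit $\varepsilon_{\A}\colon w_!w^*\A \to \A$. All three functors are left Quillen and preserve colimits, and the projective model structure on $\sSets^{\A}$ is cofibrantly generated by the diagrams $\A(a,-)\otimes(\partial\Delta^n \hookrightarrow \Delta^n)$, so it suffices to build the comparison on the representable diagrams $\A(a,-)$ and then to check that it stays a weak equivalence along the skeletal filtration of a general cofibrant diagram. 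On a representable the answer is essentially forced: $h_!(\A(a,-)) = a/w^*\A$ by the co-Yoneda lemma; the straightening of the undercategory projection $a/w^*\A \to w^*\A$ is a model for the corepresentable functor $w_!w^*\A(a,-)$ (heuristically because $\mathrm{id}_a$ is an initial vertex, so the left cone appearing in the definition of $r_!$ collapses); and left Kan extension along $\varepsilon_{\A}$, which is the identity on objects, carries that corepresentable back to $\A(a,-)$. Secondly, for every simplicial set $X$ I would construct a natural weak equivalence
\[
h_! \circ r_! \;\xrightarrow{\ \sim\ }\; \eta_!,
\]
where now $h_!$ is taken for the simplicial category $w_!X$, where $\eta_X\colon X \to w^*w_!X$ is the unit, and $\eta_!\colon \sSets/X \to \sSets/w^*w_!X$ is composition with $\eta_X$; again both sides preserve colimits, so this would be checked on the objects $\Delta^n \to X$ by unwinding the cone formula for $r_!$ together with the description of $h_!$. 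Both comparisons, and in particular the passage from representables to all cofibrant objects, rest on the analysis of $w_!$ and of the join construction on pushout squares carried out in Section~\ref{sec:cubes}, and this is the step I expect to be the main obstacle.

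Granting the two comparisons, the theorem follows formally. The unit $\eta_X$ is always a categorical equivalence, so $\eta_!$ is a Quillen equivalence for the covariant model structures and $\mathbb{L}\eta_!$ is an equivalence of homotopy categories. Applying the first comparison with $\A = w_!X$ shows that $\mathbb{L}\varepsilon_! \circ \mathbb{L}r_!$ is a left inverse of $\mathbb{L}h_!$ (taken for $w_!X$), while the second comparison shows that, after inverting the equivalence $\mathbb{L}\eta_!$, the functor $\mathbb{L}r_!$ (taken for $X$) provides a right inverse of that same $\mathbb{L}h_!$; hence $\mathbb{L}h_!$ is an equivalence, for every simplicial set $X$. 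Feeding this back into the second comparison gives that $\mathbb{L}r_!$ is an equivalence for every $X$, i.e. $(r_!,r^*)$ is a Quillen equivalence. Finally, if $\A$ is a fibrant simplicial category then $w^*\A$ is a quasi-category and the counit $\varepsilon_{\A}\colon w_!w^*\A \to \A$ is a Dwyer--Kan equivalence, being the derived counit of the Quillen equivalence $(w_!,w^*)$ at a fibrant object (recall that every simplicial set, in particular $w^*\A$, is cofibrant for the Joyal model structure); hence $\mathbb{L}\varepsilon_!$ is an equivalence, and since $\mathbb{L}r_!$ is now known to be one, the first comparison forces $\mathbb{L}h_!$ to be an equivalence as well, so $(h_!,h^*)$ is a Quillen equivalence. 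Apart from the construction of the two comparisons, the argument is thus a short diagram chase, using only the standard facts recalled above about the unit and counit of $(w_!,w^*)$ together with the characterization of Quillen equivalences in terms of derived functors.
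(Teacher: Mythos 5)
Your overall architecture is the same as the paper's: two comparison maps $\varepsilon_!r_!h_!\to\mathrm{id}$ and $h_!r_!\to\eta_!$ (these are Propositions \ref{prop:h!leftinv} and \ref{prop:h!rightinv}), followed by a formal argument with derived functors. However, there is a genuine gap in the formal part. You assert that ``the unit $\eta_X$ is always a categorical equivalence, so $\eta_!$ is a Quillen equivalence.'' This is exactly the point the paper warns about after the statement of Proposition \ref{prop:h!rightinv}: $\eta\colon X\to w^*w_!X$ is \emph{not} known to be a Joyal equivalence in general, because $w_!X$ is typically not a fibrant simplicial category (its mapping objects are products of copies of $\Delta^1$, not Kan complexes), so $w^*w_!X$ is not a model for the derived unit. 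The cofibrancy of $X$ buys you the counit statement at a fibrant $\A$, as you correctly note, but there is no dual argument for the unit. Since your extraction of a right quasi-inverse to $\mathbf{L}h_!$, and hence your entire proof that $(r_!,r^*)$ is a Quillen equivalence, passes through the invertibility of $\mathbf{L}\eta_!$, the argument collapses at this step.

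The paper's remedy is a base-change lemma (Lemma \ref{lem:h!basechange}): for a simplicial functor $F\colon\A\to\mathbf{B}$ there is a natural covariant weak equivalence $(w^*F)_!\circ h_!^{\A}\to h_!^{\mathbf{B}}\circ F_!$ on cofibrant objects. Choosing $F\colon w_!X\to\A$ to be a fibrant replacement, one combines this with Proposition \ref{prop:h!rightinv} to get $\mathbf{L}h_!^{\A}\circ\mathbf{L}F_!\circ\mathbf{L}r_!\simeq\mathbf{L}\overline{F}_!$, where $\overline{F}=w^*F\circ\eta\colon X\to w^*\A$ \emph{is} a categorical equivalence (being adjoint to an equivalence into a fibrant object); invariance of the covariant model structure then makes $\mathbf{L}\overline{F}_!$ an equivalence, which is what replaces your appeal to $\mathbf{L}\eta_!$. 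You need this extra ingredient, or something equivalent to it. A secondary caveat: you apply your first comparison with $\A=w_!X$, which is not fibrant, whereas the paper states and uses that comparison only for fibrant $\A$; the paper's routing through a fibrant replacement avoids this as well.
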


We will prove the theorem by showing that the functors $\mathbf{L}h_!$ and $\mathbf{L}r_!$ are mutually inverse in an appropriate sense. The crucial steps are Propositions \ref{prop:h!leftinv} and \ref{prop:h!rightinv} below. In Section \ref{sec:proof} we will explain in more detail how Theorem \ref{thm:main} can be deduced from these.

\begin{propositiona}
\label{prop:h!leftinv}
For a fibrant simplicial category $\A$, the two functors
\begin{eqnarray*}
\mathbf{L}r_! \circ \mathbf{L}h_!: \mathbf{Ho}(\mathbf{sSets}^{\A}) & \longrightarrow & \mathbf{Ho}(\mathbf{sSets}^{w_!w^*\A}) \\
\mathbf{R}\varepsilon^*: \mathbf{Ho}(\mathbf{sSets}^{\A}) & \longrightarrow & \mathbf{Ho}(\mathbf{sSets}^{w_!w^*\A})
\end{eqnarray*}
are naturally isomorphic.
\end{propositiona}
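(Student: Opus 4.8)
The plan is to exhibit a natural transformation between the two functors already at the point-set level, prove that it is a weak equivalence on each representable diagram $\A(a,-)$, and conclude by a coend computation. Two preliminary observations set this up. First, the counit $\varepsilon\colon w_!w^*\A\to\A$ is the identity on objects, so restriction along it preserves \emph{all} weak equivalences; hence $\mathbf{R}\varepsilon^*$ is represented by $\varepsilon^*$ itself, and — since $\varepsilon^*$ sits in the string of adjunctions $\varepsilon_!\dashv\varepsilon^*\dashv\varepsilon_*$ — it is a simplicial, cocontinuous functor $\mathbf{sSets}^{\A}\to\mathbf{sSets}^{w_!w^*\A}$. Second, $r_!h_!$ is a composite of two simplicial left adjoints (Propositions~\ref{prop:h!} and~\ref{prop:r!}), so it too is simplicial and cocontinuous, and since $h_!$ and $r_!$ are left Quillen and representable diagrams are projectively cofibrant, $r_!h_!$ already computes $\mathbf{L}r_!\circ\mathbf{L}h_!$ on projectively cofibrant input.

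A simplicial cocontinuous functor out of $\mathbf{sSets}^{\A}$, together with natural transformations between two such, is determined by its restriction along the enriched Yoneda embedding $\mathcal{Y}\colon\A^{\mathrm{op}}\to\mathbf{sSets}^{\A}$, $a\mapsto\A(a,-)$. It therefore suffices to build a natural transformation $\vartheta_a\colon r_!h_!(\A(a,-))\to\varepsilon^*\A(a,-)$ in $\mathbf{sSets}^{w_!w^*\A}$, natural in $a\in\A^{\mathrm{op}}$, and to let $\vartheta\colon r_!h_!\Rightarrow\varepsilon^*$ be the induced transformation. By Proposition~\ref{prop:h!} one has $h_!(\A(a,-))=h(a)=a/w^*\A$, so the source of $\vartheta_a$ is the straightening $r_!(a/w^*\A\to w^*\A)$, viewed as a diagram on $w_!w^*\A$, and its target is the diagram $x\mapsto\A(a,x)$. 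The map $\vartheta_a$ is induced by the natural retraction $(a/w^*\A)^{\triangleleft}\to w^*\A$ which extends the projection and carries the cone point to $a$: this produces a retraction of the pushout $Y(p)$ onto $w^*\A$ taking $-\infty$ to $a$, hence on mapping spaces a natural map $r_!(a/w^*\A)(x)=w_!Y(p)(-\infty,i(x))\to w_!w^*\A(a,x)$, which we postcompose with $\varepsilon$; this is natural in $a$ by construction. (Equivalently, one constructs the adjoint map $a/w^*\A\to r^*\varepsilon^*\A(a,-)$ of left fibrations over $w^*\A$.) For the next step it is convenient to replace $w_!Y(p)(-\infty,-)$ by the explicit, cube-indexed model of the straightening afforded by Section~\ref{sec:cubes}.

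The crux — and the step I expect to be the main obstacle — is to prove that $\vartheta_a$ is a weak equivalence for every object $a$, compatibly in $a$. Conceptually this is clear: $a/w^*\A\to w^*\A$ is the representable left fibration at $a$, whose straightening has as its value at a vertex $x$ a point-set model for the mapping space $\mathrm{Map}_{w^*\A}(a,x)$; since $\A$ is fibrant, $w^*\A$ is a quasi-category, and the canonical maps $\mathrm{Map}_{w^*\A}(a,x)\to w_!w^*\A(a,x)\to\A(a,x)$ are weak equivalences, the second because $\varepsilon$ is a Dwyer--Kan equivalence (again using that $\A$ is fibrant). Turning this into a point-set statement — i.e.\ showing that the cubical model of $r_!(a/w^*\A)(x)$ from Section~\ref{sec:cubes} is carried by $\vartheta_a$ to a weak equivalence onto $\A(a,x)$, naturally in $a$ and in $x\in w_!w^*\A$ — is precisely where the combinatorics of the $W$-construction and the fibrancy hypothesis are really used, and I expect this to be the technical heart of the argument. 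Fibrancy of $\A$ is indispensable here: without it $\varepsilon$ need not be a weak equivalence and the proposition is false.

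Granting this, fix a projectively cofibrant $F\in\mathbf{sSets}^{\A}$. Since $r_!h_!$ is simplicial and cocontinuous and is recovered from $r_!h_!\circ\mathcal{Y}$ by left Kan extension along $\mathcal{Y}$, there is a natural identification $r_!h_!(F)=\int^{a}F(a)\otimes r_!(h(a))$, and likewise $\varepsilon^*(F)=\int^{a}F(a)\otimes\varepsilon^*\A(a,-)$ by the enriched co-Yoneda lemma together with cocontinuity of $\varepsilon^*$; under these identifications the transformation $\vartheta$ is just $\int^{a}F(a)\otimes\vartheta_a$. Now $r_!h_!(F)=\mathbf{L}(r_!h_!)(F)$ and $\varepsilon^*(F)=\mathbf{R}\varepsilon^*(F)$ by the first observation above, and tensoring over $\A$ with the projectively cofibrant diagram $F$ preserves objectwise weak equivalences of $\A^{\mathrm{op}}$-diagrams (cofibrant diagrams are flat). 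Since $\vartheta_a$ is such an objectwise weak equivalence, it follows that $\vartheta_F\colon\mathbf{L}(r_!h_!)(F)\to\mathbf{R}\varepsilon^*(F)$ is a weak equivalence, naturally in the projectively cofibrant $F$; hence $\vartheta$ descends to a natural isomorphism $\mathbf{L}r_!\circ\mathbf{L}h_!\xrightarrow{\ \cong\ }\mathbf{R}\varepsilon^*$ of functors $\mathbf{Ho}(\mathbf{sSets}^{\A})\to\mathbf{Ho}(\mathbf{sSets}^{w_!w^*\A})$, which is the assertion. (One could also argue the last step via the two-sided bar resolution, since both functors preserve homotopy colimits and every object is a homotopy colimit of representables, but the coend route avoids any flatness discussion for the diagram $a\mapsto r_!(h(a))$.)
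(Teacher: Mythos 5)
Your construction of the comparison map $\vartheta$ agrees in substance with the paper's transformation $\nu$, and the formal bookkeeping at the end (cocontinuity, reduction to representables, flatness of cofibrant diagrams) is fine. But the step you yourself flag as ``the crux'' --- that $\vartheta_a\colon r_!\bigl(a/w^*\A\to w^*\A\bigr)(x)\to\A(a,x)$ is a weak equivalence for all $a$ and $x$ --- is asserted rather than proved, and the conceptual justification you offer for it is circular. The claim that the straightening of the representable left fibration $a/w^*\A\to w^*\A$ has, at each vertex $x$, a value weakly equivalent to the mapping space $\mathrm{Map}_{w^*\A}(a,x)$ is not an input one may ``grant'': it is essentially the content of the straightening--unstraightening equivalence restricted to representables, i.e.\ a special case of Theorem~\ref{thm:main} itself, and it is exactly the computation whose difficulty the present paper is designed to avoid. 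Nothing proved earlier in the paper (nor the independently known fact that $(w_!,w^*)$ is a Quillen equivalence) identifies $w_!Y(p)(-\infty,x)$ with a mapping space in $w^*\A$; establishing that identification directly is the hard combinatorial analysis of \cite{htt}, \S2.2.

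The paper's proof sidesteps this entirely, and the maneuver is worth internalizing. Instead of proving that $\nu\colon r_!h_!\to\varepsilon^*$ is an objectwise equivalence, one passes to the adjoint transformation $\bar\nu\colon\varepsilon_!\, r_!\, h_!\to\mathrm{id}$ and shows \emph{that} is an equivalence on cofibrant objects; since $\mathbf{L}\varepsilon_!$ is an equivalence of homotopy categories with quasi-inverse $\mathbf{R}\varepsilon^*$ (here is where fibrancy of $\A$ enters, via Proposition~\ref{prop:invprojective}), this suffices. The gain is that $\varepsilon_!\, r_!\, h_!$ is a composite of left Quillen functors, so it preserves the covariant trivial cofibration $\Delta^0\to a/w^*\A$ over $w^*\A$ (Lemma~2.4 of \cite{leftfibrations} --- an elementary statement about left anodyne maps, not a straightening computation), and $\varepsilon_!\, r_!\bigl(\Delta^0\xrightarrow{a}w^*\A\bigr)\cong\A(a,-)$ by direct inspection. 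Two-out-of-three then gives $\bar\nu(a,0)$, with no need to ever compute $r_!(a/w^*\A)(x)$. If you want to salvage your more direct route, you would have to supply an independent proof of the mapping-space identification, which is a substantially harder task than the one you are replacing.
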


The functor $\varepsilon: w_!w^*\A \rightarrow \A$ is an equivalence of simplicial categories (since $(w_!, w^*)$ is a Quillen equivalence), so that $\mathbf{R}\varepsilon^*$ is part of a Quillen equivalence (see Proposition \ref{prop:invprojective}). Therefore the previous proposition shows that $\mathbf{L}h_!$ admits a left (quasi-)inverse. We will use the following to deduce that it also admits a right inverse:

\begin{propositiona}
\label{prop:h!rightinv}
Let $X$ be a simplicial set and write $\eta: X \rightarrow w^*w_!X$ for the unit of the adjoint pair $(w_!, w^*)$. Then the two functors
\begin{eqnarray*}
\mathbf{L}h_! \circ \mathbf{L}r_!: \mathbf{Ho}(\sSets/X) & \longrightarrow & \mathbf{Ho}(\sSets/w^*w_!X) \\
\mathbf{L}\eta_!: \mathbf{Ho}(\sSets/X) & \longrightarrow & \mathbf{Ho}(\sSets/w^*w_!X)
\end{eqnarray*}
are naturally isomorphic.
\end{propositiona}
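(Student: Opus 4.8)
\emph{Reductions.} The plan is to compare the underlying left Quillen functors directly. Every object of $\mathbf{sSets}/X$ is cofibrant in the covariant model structure, and $r_!$ sends cofibrant objects to projectively cofibrant ones (as $r_!(\emptyset)=\emptyset$ and $r_!$ preserves cofibrations by Proposition~\ref{prop:r!}); hence for $p\colon Y\to X$ there are canonical, natural identifications $\mathbf{L}h_!\circ\mathbf{L}r_!(p)\cong h_!r_!(p)$ and $\mathbf{L}\eta_!(p)\cong\eta_!(p)=\bigl(Y\xrightarrow{p}X\xrightarrow{\eta}w^*w_!X\bigr)$. It therefore suffices to produce a natural transformation
\[
\mu\colon\ \eta_!\ \Longrightarrow\ h_!\circ r_!\colon\qquad \mathbf{sSets}/X\longrightarrow\mathbf{sSets}/w^*w_!X
\]
(or, if unavoidable, a natural zig-zag) whose components are covariant weak equivalences. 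Since $\mathbf{sSets}/X$ is the category of presheaves on the simplex category $\Delta/X$ and both $\eta_!$ and $h_!\circ r_!$ are cocontinuous, $\mu$ is determined by, and may be constructed from, its restriction to the representables $\sigma\colon\Delta^n\to X$, once naturality over $\Delta/X$ is verified. Moreover every object of $\mathbf{sSets}/X$ is built from such representables by iterated homotopy pushouts along cofibrations and sequential homotopy colimits (its skeletal filtration), and these are preserved by $\eta_!$ and by $h_!\circ r_!$, the latter being a composite of left Quillen functors (Propositions~\ref{prop:h!} and~\ref{prop:r!}); so it is enough that the components of $\mu$ at representables be covariant weak equivalences.

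\emph{A coend model.} Since $h_!$ is the simplicial left Kan extension of $h$, so that $h_!(F)=\int^{a\in w_!X}F(a)\otimes(a/w^*w_!X)$, the coend description of $r_!$ together with a Fubini argument gives a natural identification
\[
h_!r_!\bigl(Y\xrightarrow{p}X\bigr)\ \cong\ \int^{c\in w_!Y}\ w_!(Y^{\triangleleft})(-\infty,c)\otimes\bigl((w_!p)(c)/w^*w_!X\bigr)
\]
in $\mathbf{sSets}/w^*w_!X$. For a representable $\Delta^n\to X$ one has $Y^{\triangleleft}=\Delta^{n+1}$, and each mapping space $w_!(\Delta^{n+1})(-\infty,-)=\mathfrak{C}[\Delta^{n+1}](0,-)$ is the nerve of a poset of subsets of $\{0,\dots,n+1\}$, hence a contractible cube with a unique maximal vertex. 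Using these maximal vertices together with the \emph{maximal} morphisms $\{i,\dots,j\}\in\mathfrak{C}[\Delta^n](i,j)$ — the ones that underlie the unit $\eta$ — one singles out, on the nondegenerate $n$-simplex, an $n$-simplex of the coend whose image under the structure map to $w^*w_!X$ is $\eta\sigma$ of that simplex; this defines $\mu_\sigma\colon\eta_!(\sigma)\to h_!r_!(\sigma)$ over $w^*w_!X$, and naturality over $\Delta/X$ follows from the coend relations and the functoriality of these maximal choices.

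\emph{The key step.} The heart of the matter is that each $\mu_\sigma$ is a covariant weak equivalence, and here the combinatorics of Section~\ref{sec:cubes} is needed. The idea is to exhibit $h_!r_!(\sigma)$ as obtained from (the image of) $\eta_!(\sigma)=\Delta^n$ by attaching the cube factors $w_!(\Delta^{n+1})(-\infty,c)$ along their maximal vertices and the slices $\sigma(c)/w^*w_!X$ along appropriate subsimplices, and then to collapse the cubes onto their maximal vertices: filtering $h_!r_!(\sigma)$ by the cube-skeleta of the mapping spaces of $\mathfrak{C}[\Delta^{\bullet+1}]$ exhibits $\mu_\sigma$ as a finite composite of pushouts of left anodyne maps, or equivalently produces a fibrewise deformation retraction of $h_!r_!(\sigma)$ onto $\eta_!(\sigma)$ over $w^*w_!X$. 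Granting this, $\mu$ extends uniquely to a natural transformation of the cocontinuous functors $\eta_!$ and $h_!\circ r_!$ whose general components are, by the homotopy-colimit argument of the first paragraph, again covariant weak equivalences, and passing to homotopy categories yields the asserted isomorphism $\mathbf{L}h_!\circ\mathbf{L}r_!\cong\mathbf{L}\eta_!$.

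\emph{The main obstacle.} I expect the collapsing step to be the real difficulty. Over a simplicial category the slices $c/w^*w_!X$ and the mapping spaces $\mathfrak{C}[\Delta^{n+1}](0,c)$ are genuinely higher-dimensional, so the deformation retraction must be organized carefully along the cube filtration — precisely the complication, relative to the ordinary-category case of part~I, that makes these arguments more involved. A secondary nuisance is that $h_!$ and $r_!$ do not commute strictly with pushforward along a map $X\to X'$, but only up to the weak equivalences supplied by the same cube techniques, so some bookkeeping is required to make the reduction to representables and the naturality over $\Delta/X$ fully rigorous.
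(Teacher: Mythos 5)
Your overall architecture (reduce to representables by skeletal induction and left-Quillen-ness, then do cube combinatorics on a simplex) matches the paper's, but there are two genuine gaps. First, the construction of a \emph{direct} natural transformation $\mu\colon \eta_!\Rightarrow h_!r_!$ via ``maximal vertices'' is doubtful: the cosimplicial structure maps of $w_!\Delta^\bullet$ send a sequence $(t_l)$ to $(s_k)$ with $s_k=\bigvee_{\alpha(l)=k}t_l$, where the empty supremum is the \emph{neutral} element $-$, not the absorbing element $+$. Hence face maps do not preserve the maximal vertex of the cubes (equivalently, $\alpha_*$ sends the maximal subset $\{i,\dots,j\}$ to $\alpha(\{i,\dots,j\})$, which is a proper subset of $\{\alpha(i),\dots,\alpha(j)\}$ when $\alpha$ skips vertices), so your choices are not functorial over $\Delta/X$ as claimed; ``naturality follows from the coend relations'' is exactly the point that needs proof and is likely false as stated. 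This is why the paper — and already Part~I in the much simpler discrete case — works with a zigzag $h_!r_!\xrightarrow{\gamma}L\xleftarrow{\iota}\eta_!$ through an auxiliary ``ladder'' functor $L$ (built from the cosimplicial simplicial categories $S^\bullet$ and $T^\bullet$), where $\iota$ is an explicit covariant deformation retract, rather than with a single natural transformation.

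Second, and more importantly, the heart of the argument — that the comparison map is a covariant weak equivalence on a simplex — is not proved: you write ``Granting this'' and then flag the collapsing step as the expected difficulty. The paper never performs the cube-collapsing deformation retraction you anticipate. Instead, after reducing to $p\colon\Delta^n\to X$, it observes that $\Delta^0\xrightarrow{0}\Delta^n$ is a covariant weak equivalence preserved by both $h_!r_!$ and $L$ (both being left Quillen), thereby reducing all the way to a vertex $x\colon\Delta^0\to X$; there $h_!r_!(x)=x/w^*w_!X$ and the claim is exactly Lemma~2.4 of Part~I plus two-out-of-three. If you want to salvage your approach, you should either adopt the zigzag through $L$, or carry out this further reduction to vertices, which eliminates the cube filtration entirely. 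Note also that even if you produced a fibrewise strong deformation retraction onto $\Delta^n$, you would still need it to be a \emph{covariant} deformation retract (or a composite of left anodyne pushouts) over $w^*w_!X$; an unstructured homotopy equivalence does not suffice for the covariant model structure.
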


The reader should note that the previous proposition does not quite prove that $h_!$ admits a right inverse: indeed, the functor $\mathbf{L}\eta_!$ need not be an equivalence, because $\eta: X \rightarrow w^*w_!X$ is not necessarily an equivalence in the Joyal model structure. Rather, one should first take a fibrant replacement of $w_!X$ before applying $w^*$. We remedy this defect in Section \ref{sec:proof} by examining the properties of $h_!$ with respect to base change.

\section{Cubes and the Boardman-Vogt $W$-resolution}
\label{sec:cubes}

In this section we study some cosimplicial objects which play a central role in the rest of this paper. The reader might wish to skip this section and refer back to it as needed. As usual, write $[n]$ for the linearly ordered set $\{0, \ldots, n\}$. Also, write $P^n$ for the partially ordered set of nonempty subsets of $[n]$ and $NP^n$ for its nerve, which may be thought of as a simplicial subset of the cube $\prod_{i=0}^n \Delta^1$. (In fact, $NP^n$ is also the barycentric subdivision of $\Delta^n$.) Then $NP^\bullet$ is a cosimplicial simplicial set in a natural way. Indeed, for a map $\alpha: [m] \rightarrow [n]$ in the simplex category $\mathbf{\Delta}$, define $\alpha_*: NP^m \rightarrow NP^n$ by sending a nonempty subset $S \subseteq [m]$ to $\alpha(S) \subseteq [n]$. Define another simplicial set $C^n$ as the quotient of $NP^n$ which collapses, for each $i$, the simplicial subset $\prod_{j=0}^{i-1} \Delta^1 \times \{1\} \times \prod_{j=i+1}^n \Delta^1$ onto $\prod_{j=0}^{i-1} \Delta^1 \times \prod_{j=i}^n \{1\}$. One easily verifies that the structure maps of the cosimplicial object $NP^\bullet$ also induce a cosimplicial structure on $C^\bullet$. Finally, there is a natural map of cosimplicial objects $\bar{\mu}: (NP^\bullet)^{\mathrm{op}} \rightarrow \Delta^\bullet$, defined by sending a nonempty subset $S \subset [n]$ to its minimal element. This map evidently factors through a map $\mu: (C^\bullet)^{\mathrm{op}} \rightarrow \Delta^\bullet$. Here, for a simplicial set $X$, the simplicial set $X^{\mathrm{op}}$ is its \emph{opposite} or \emph{reverse}. It is obtained from $X: \mathbf{\Delta}^{\mathrm{op}} \rightarrow \mathbf{Sets}$ by precomposing with the automorphism of $\mathbf{\Delta}^{\mathrm{op}}$ which reverses linear orderings. This automorphism is the identity on objects, but for example swaps the face maps $\partial_i: [n-1] \rightarrow [n]$ and $\partial_{n-i}: [n-1] \rightarrow [n]$.

Perhaps it will help the reader's intuition to consider the geometric realizations of the constructions described above, which we do now. The space $|NP^n|$ has as its points sequences of real numbers $(t_0, \ldots, t_n)$, where $0 \leq t_i \leq 1$ for every $i$ and at least one of the $t_i$ equals $1$. Two such sequences $(t_0, \ldots, t_n)$ and $(t'_0, \ldots, t'_n)$ are identified in $|C^n|$ if $t_i = t'_i = 1$ for some $i$ and $t_j = t'_j$ for $j < i$. The cosimplicial structure maps $\partial_k: |C^{n-1}| \rightarrow |C^n|$ and $\sigma_l: |C^n| \rightarrow |C^{n-1}|$ for $k = 0, \ldots, n$ and $l = 0, \ldots, n-1$ are described by
\begin{eqnarray*}
\partial_k(t_0, \ldots, t_{n-1}) & = & (t_0, \ldots, t_{k-1}, 0, t_k, \ldots, t_{n-1}), \\
\sigma_l(t_0, \ldots, t_n) & = & (t_0, \ldots, t_l \vee t_{l+1}, \ldots, t_n).
\end{eqnarray*}
Here $\vee$ denotes taking the supremum of two numbers. Note that in the particular case $\partial_n: |C^{n-1}| \rightarrow |C^{n}|$ it does not matter whether we write $\partial_n(t_0, \ldots, t_{n-1}) = (t_0, \ldots, t_{n-1}, 0)$ or $\partial_n(t_0, \ldots, t_{n-1}) = (t_0, \ldots, t_{n-1}, 1)$; indeed, these sequences are identified because there always exists $0 \leq i < n$ such that $t_i = 1$. Also, each point of $|C^n|$ has a unique representative of the form $(t_0, \ldots, t_{i-1}, 1, \ldots, 1)$, where $t_j$ is strictly smaller than $1$ for $j < i$. To describe the map from $|C^n|$ to the topological $n$-simplex $|\Delta^n|$ we first introduce a convenient model for the latter. Write $D^n$ for the space of sequences of real numbers $(s_1, \ldots, s_n)$ such that $0 \leq s_1 \leq \cdots \leq s_n \leq 1$. Then $D^n$ is homeomorphic to the more usual space
\begin{equation*}
|\Delta^n| = \{(t_0, \ldots, t_n) \, | \, t_i \geq 0, \Sigma_i t_i = 1\} 
\end{equation*}
via the map $\varphi: |\Delta^n| \rightarrow D^n$ sending $(t_0, \ldots, t_n)$ to the sequence $(s_1, \ldots, s_n)$ where
\begin{equation*}
s_i = \sum_{0 \leq k < i} t_k.
\end{equation*}
The natural map $|\mu|: |C^n| \rightarrow D^n$ is given by sending (the equivalence class of) a sequence $(t_0, \ldots, t_n)$ to $(s_1, \ldots, s_n)$ with
\begin{equation*}
s_k := t_0 \vee \cdots \vee t_{k-1}.
\end{equation*}
Note that there is no need to take `reverses' into account here, since the geometric realization of a simplicial set and its opposite are naturally homeomorphic.
\\

The reason we are led to consider the cosimplicial simplicial set $C^\bullet$ described above stems from the Boardman-Vogt $W$-resolution, to which we will devote the remainder of this section. This resolution provides for each simplex $\Delta^n$ a simplicial category $W\Delta^n$, naturally in $[n]$. By left Kan extension this determines an adjoint pair of functors
\[
\xymatrix{
w_!: \mathbf{sSets} \ar@<.5ex>[r] & \mathbf{sCat}: w^*, \ar@<.5ex>[l]
}
\]
already mentioned at the start of this paper. To ease the exposition we will first give a description of the topological category $|W\Delta^n|$ obtained by taking geometric realizations of all the mapping objects of $W\Delta^n$. The objects of this category are the vertices of $\Delta^n$. The mapping space $|W\Delta^n(i,j)|$ is empty for $i > j$, whereas for $0 \leq i \leq j \leq n$, the mapping space $|W\Delta^n(i,j)|$ is the space of sequences of real numbers $(1, t_{i+1}, \ldots, t_{j-1}, 1)$, where each $t_k$ is contained in the unit interval. It is convenient to think of the number $t_k$ as being a label on the object $k$. Composition in this category is given by amalgamating sequences; to be precise, composing sequences 
\begin{equation*}
(1, t_{i+1}, \ldots, t_{j-1}, 1) \in |W\Delta^n(i,j)| \quad \text{and} \quad (1, t_{j+1}, \ldots, t_{k-1},1) \in  |W\Delta^n(j,k)|
\end{equation*}
gives the sequence
\begin{equation*}
(1, t_{i+1}, \ldots, t_{j-1}, 1, t_{j+1}, \ldots, t_{k-1}, 1) \in |W\Delta^n(i,k)|.
\end{equation*}
There is an evident functor $|W\Delta^n| \rightarrow [n]$ which is the identity on objects. Conversely, there is a functor $[n] \rightarrow |W\Delta^n|$ which is again the identity on objects and, for $0 \leq i \leq j \leq n$, sends the unique map $i \rightarrow j$ to the sequence all of whose entries are 1. We should explain how the construction of $|W\Delta^n|$ is natural in $[n]$. For any map $\alpha: [m] \rightarrow [n]$ in the simplex category, the functor $\alpha_*: |W\Delta^m| \rightarrow |W\Delta^n|$ is given by $\alpha$ itself on objects, whereas on morphisms it sends a sequence $(1, t_{i+1}, \ldots, t_{j-1}, 1)$ as above to the sequence $(1, s_{\alpha(i)+1}, \ldots, s_{\alpha(j)-1},1)$ where
\begin{equation*}
s_k = \bigvee_{\alpha(l) = k} t_l \quad\quad \text{for } \alpha(i) < k < \alpha(j).
\end{equation*}
Here $\vee$ again indicates taking the supremum of a set of real numbers, with the convention that this supremum is zero if the set is empty. The purpose of the category $|W\Delta^n|$ is to describe \emph{homotopy-coherent diagrams} on $\Delta^n$. For example, for $n=2$, a continuous functor $F$ from $|W\Delta^n|$ to a topological category $\mathbf{C}$ consists of three morphisms $F(f): F(0) \rightarrow F(1)$, $F(g): F(1) \rightarrow F(2)$, $F(h): F(0) \rightarrow F(2)$ in $\mathbf{C}$ together with a homotopy (rather than an identity) between $F(h)$ and $F(g) \circ F(f)$.

To describe the simplicial category $W\Delta^n$ itself, we note that with minor modifications the construction above makes sense for any reasonable \emph{interval object} (as in \cite{bergermoerdijkBV}). For our purposes, an interval object is a simplicial set $I$ with two distinguished vertices $-$ and $+$, together with an associative operation $\vee: I \times I \rightarrow I$ for which $-$ is neutral and $+$ is absorbing. In terms of equations this means
\begin{equation*}
- \vee x = x \vee - = x \quad\quad \text{and} \quad\quad + \vee x = x \vee + = +.
\end{equation*}
In the topological setting above the relevant interval object is the real unit interval with supremum as the operation $\vee$. In the case of simplicial sets the example relevant to us here has $I$ equal to the 1-simplex $\Delta^1$ and $- = 1$, $+ = 0$, where the operation $\vee$ is now (on vertices) given by taking the \emph{minimum} of a pair $(i,j)$. The reason for this convention, rather than the obvious one with $- = 0$ and $+ = 1$, is the contravariance of the map $\mu: (NP^\bullet)^{\mathrm{op}} \rightarrow \Delta^\bullet$ described at the start of this section. 

The simplicial category $W\Delta^n$ has the vertices of $\Delta^n$ as its objects; the mapping objects in this simplicial category are defined by
\begin{equation*}
W\Delta^n(i,j) = \prod_{i < k < j} I
\end{equation*}
for $i \leq j$ and are empty for $i > j$. In other words, $W\Delta^n(i,i)$ is simply $\Delta^0$ for each $i$, whereas for $0 \leq i < j \leq n$ we have $W\Delta^n(i,j) \simeq I^{\times (j-i-1)}$. Composition $W\Delta^n(i,j) \times W\Delta^n(j,k) \rightarrow W\Delta^n(i,k)$ is now described by the inclusion
\begin{equation*}
\prod_{i < l < j} I \times \prod_{j < l < k} I  \longrightarrow \prod_{i < l < k} I
\end{equation*}
which inserts $+$ in the $j$'th factor. Naturality in $[n]$ is defined analogously to the topological case, now using the operation $\vee$ on $I$. To be explicit, for a map $\alpha: [m] \rightarrow [n]$ and $0 \leq i \leq j \leq m$ we should provide a map
\begin{equation*}
\prod_{i < k < j} I \longrightarrow \prod_{\alpha(i) < l < \alpha(j)} I.
\end{equation*}
This map itself splits into a product of factors, one for each $l$, of the form
\begin{equation*}
\prod_{i < k < j, \, \alpha(k) = l} I \longrightarrow I.
\end{equation*}
If the indexing set on the right is empty, this map is the inclusion $-: \Delta^0 \rightarrow I$. If it is not, then one repeatedly applies the operation $\vee$.
\\

To conclude this section, let us explain the relation between the Boardman-Vogt $W$-construction and the cosimplicial object $C^\bullet$. Consider the simplicial set $Q^n$ defined by the pushout diagram
\[
\xymatrix{
\Delta^n \ar[r]^-{\partial_0} \ar[d] & \Delta^{n+1} \ar[d] \\
\Delta^0 \ar[r] & Q^n.
}
\]
The top horizontal map is perhaps more naturally thought of as the inclusion $\Delta^n \rightarrow \Delta^0 \star \Delta^n$ and this identification makes it clear that $Q^\bullet$ inherits the structure of a cosimplicial object from $\Delta^\bullet$. Write $-\infty$ for the vertex of $Q^n$ which is the image of $0$ in $\Delta^{n+1}$ and $v$ for the vertex which is the image of all other vertices of $\Delta^{n+1}$ (cf. the first section). Then we have an identification of cosimplicial objects
\begin{equation*}
w_!Q^\bullet(-\infty,v) \simeq (C^\bullet)^{\mathrm{op}}.
\end{equation*}
In other words, we have 
\begin{equation*}
r_!(\Delta^n \rightarrow \Delta^0)(v) = (C^n)^{\mathrm{op}},
\end{equation*}
with the notation of the first section and $v$ denoting the unique vertex of $\Delta^0$. Finally, we will need that for each $0 \leq i \leq n+1$, the map $\Delta^{n+1} \rightarrow Q^n$ induces a map
\begin{equation*}
\pi_i: w_!\Delta^{n+1}(0,i) \longrightarrow w_!Q^n(-\infty,v) \simeq (C^n)^{\mathrm{op}}.
\end{equation*}

\section{Review of some model structures}
\label{sec:review}

The relevant model structures in this paper are the \emph{covariant model structure} on the category $\mathbf{sSets}/X$ of simplicial sets over $X$ and the \emph{projective model structure} on the category of simplicial functors $\mathbf{A} \rightarrow \mathbf{sSets}$. We discussed the first of these in the previous paper \cite{leftfibrations}, as well as the second in the case where $\mathbf{A}$ is a discrete category. The case where $\mathbf{A}$ is not necessarily discrete is, in terms of definitions, identical. Indeed, a natural transformation $F \rightarrow G$ between simplicial functors from $\mathbf{A}$ to $\mathbf{sSets}$ is a \emph{projective weak equivalence} (resp. a \emph{projective fibration}) if for each object $a$ of $\mathbf{A}$ the map $F(a) \rightarrow G(a)$ is a weak equivalence (resp. a Kan fibration) of simplicial sets. This model structure is cofibrantly generated, with generating cofibrations and trivial cofibrations of the form
\begin{equation*}
\mathbf{A}(a,-) \times \partial \Delta^n \longrightarrow \mathbf{A}(a,-) \times \Delta^n \quad\quad \text{for } n \geq 0 \text{ and } a \in \mathbf{A}
\end{equation*}
and
\begin{equation*}
\mathbf{A}(a,-) \times \Lambda_k^n \longrightarrow \mathbf{A}(a,-) \times \Delta^n \quad\quad \text{for } 0 \leq k \leq n \text{ and } a \in \mathbf{A}
\end{equation*}
respectively. The projective model structure on $\mathbf{sSets}^{\mathbf{A}}$ is simplicial, where the simplicial structure is the usual one given by
\begin{eqnarray*}
(F \otimes M)(a) & = & F(a) \times M \\
\mathrm{Map}(F, G)_n & = & \mathrm{Hom}(F \otimes \Delta^n, G)
\end{eqnarray*}
for simplicial functors $F, G: \mathbf{A} \rightarrow \mathbf{sSets}$ and a simplicial set $M$.

Furthermore, the adjoint pair $(w_!,w^*)$ constructed in the previous section gives a Quillen equivalence between the Joyal model structure on the category of simplicial sets and the Bergner model structure on the category of simplicial categories. Lurie proves this in \cite{htt}, but his proof depends on Theorem \ref{thm:main}. An independent proof is given by Joyal in \cite{joyalcomparison}, using results from \cite{bergner} and \cite{joyaltierney}. A proof in the more general setting of dendroidal sets and simplicial operads is given in \cite{cisinskimoerdijk3}, Corollary 8.16.

In particular, if $\mathbf{A}$ is a fibrant simplicial category (meaning all its mapping objects are Kan complexes), the counit map
\begin{equation*}
\varepsilon: w_! w^*(\mathbf{A}) \longrightarrow \mathbf{A}
\end{equation*}
is an equivalence of simplicial categories. Also, for a simplicial set $X$ and $w_!X \rightarrow \mathbf{A}$ a fibrant replacement, the map
\begin{equation*}
\eta: X \longrightarrow w^*(\mathbf{A})
\end{equation*}
induced by the unit is a categorical equivalence (i.e. an equivalence in the Joyal model structure). We will need these two facts in combination with the following two invariance results:

\begin{proposition}
\label{prop:invcovariant}
For a categorical equivalence of simplicial sets $f: X \rightarrow Y$, the corresponding adjoint pair
\[
\xymatrix{
f_!: \mathbf{sSets}/X \ar@<.5ex>[r] & \mathbf{sSets}/Y: f^*, \ar@<.5ex>[l]
}
\]
given by composing with and pulling back along $f$, is a Quillen equivalence for the covariant model structures.
\end{proposition}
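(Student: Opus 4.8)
The plan is to verify that $(f_!, f^*)$ is a Quillen pair and then that it is a Quillen equivalence, using the explicit description of the covariant model structure recalled in Section \ref{sec:review}. First I would check the Quillen pair property. The functor $f_!$ is composition with $f$, which obviously sends cofibrations (= monomorphisms) to cofibrations, so it suffices to show it preserves trivial cofibrations. For the covariant model structure on $\mathbf{sSets}/X$ the generating trivial cofibrations can be taken to be the maps of the form $\Lambda^n_0 \to \Delta^n$ (for the left-anodyne/covariant analogue: the left horn inclusions together with the pushout-products with the left anodyne maps), equipped with an arbitrary map to $X$; since $f_!$ of such a map is the same underlying simplicial set map now living over $Y$, and these classes are defined internally to $\mathbf{sSets}$, $f_!$ preserves them. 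Alternatively, and more robustly, one observes that $f^*$ preserves fibrations and trivial fibrations: the covariant fibrations over $X$ are characterized by a right lifting property against maps that $f^*$ manifestly respects (pullback of a left fibration along any map is a left fibration), and trivial fibrations are detected on underlying simplicial sets. So $(f_!, f^*)$ is a Quillen adjunction for any $f$, not just a categorical equivalence.

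Next I would prove the Quillen equivalence. The cleanest route is to use the $2$-out-of-$3$ style argument via factorization: any categorical equivalence $f: X \to Y$ factors as a trivial cofibration followed by a trivial fibration in the Joyal model structure, $X \xrightarrow{j} Z \xrightarrow{p} Y$ with $j$ a monomorphism that is a categorical equivalence and $p$ a trivial Joyal fibration. Since Quillen equivalences compose and satisfy $2$-out-of-$3$, it suffices to treat these two cases separately. For $p$ a trivial Joyal fibration: then $p$ admits a section $s$ with $ps = \mathrm{id}$ and $sp$ fiberwise categorically homotopic to the identity, and one checks directly that $p_! \dashv p^*$ and $s_! \dashv s^*$ are inverse Quillen equivalences (the fiberwise homotopy translates into a natural weak equivalence on the point-set level, so even the underlying adjunctions are "equivalences up to homotopy"). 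For $j$ a trivial Joyal cofibration (inner anodyne plus the relevant invertibility of edges): here I would show $\mathbf{L}j_! = j_!$ already preserves and reflects covariant weak equivalences and that the derived unit and counit are isomorphisms, using that $j^*$ of a covariant fibrant object over $Z$ restricts to a covariant fibrant object over $X$ (a left fibration pulls back to a left fibration) and that this restriction is a categorical — hence, over the base, a covariant — equivalence. This last point is where the weight of the argument sits.

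The main obstacle I expect is the inner-anodyne/trivial-cofibration case: establishing that for a monomorphic categorical equivalence $j: X \to Y$ and a left fibration $E \to Y$ with $E$ covariantly fibrant, the pullback $j^* E = E \times_Y X \to X$ is a covariant equivalence over $X$ (equivalently that $E \to Y$ and $j^*E \to X$ have "the same" homotopy type of fibers after straightening). Concretely one wants: for every vertex $x \in X$, the map $(j^*E)_x \to E_{j(x)}$ on fibers is a weak homotopy equivalence. When $j$ is inner anodyne the fibers are literally unchanged; the subtlety is only in the extra generators of the Joyal structure that invert edges, and there one uses that a left fibration is conservative on the fibers in the appropriate sense, so that inverting an edge in the base does not change the homotopy type of the fiber over either endpoint. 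Granting this, the derived unit $\mathrm{id} \to \mathbf{R}j^* \circ \mathbf{L}j_!$ and derived counit $\mathbf{L}j_! \circ \mathbf{R}j^* \to \mathrm{id}$ are seen to be weak equivalences by evaluating on covariantly (co)fibrant objects and reducing to the fiberwise statement just discussed. An alternative that sidesteps some of this is to invoke the straightening equivalence of Proposition \ref{prop:r!} together with Theorem \ref{thm:main} to transport the problem to the projective model structures, where invariance under equivalences of the indexing simplicial category is Proposition \ref{prop:invprojective}; but since Theorem \ref{thm:main} is proved later using these invariance results, one should give the direct argument here to avoid circularity.
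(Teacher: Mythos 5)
The paper does not actually prove this proposition here: it explicitly defers to part I \cite{leftfibrations}, precisely because (as you correctly note at the end) Lurie's argument in \cite{htt} uses the straightening equivalence and would be circular in the present context. So your proposal has to stand on its own. Its skeleton --- establish the Quillen adjunction for arbitrary $f$, then factor a categorical equivalence as a Joyal trivial cofibration followed by a Joyal trivial fibration and treat the two cases separately --- is a legitimate strategy. The adjunction step is essentially fine in its conclusion, though your description of the model structure is off: the covariant trivial cofibrations over $X$ form a strictly larger class than the left anodynes (already over $\Delta^0$, where the covariant structure is the Kan--Quillen structure), and the covariant fibrations between non-fibrant objects are not simply the left fibrations. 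The clean argument is that $f_!$ preserves monomorphisms and that covariant equivalences over $X$ remain covariant equivalences over $Y$ after composition with $f$, a lemma from part I.

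Both main steps, however, contain genuine gaps, and in each case the gap sits exactly where the left/right asymmetry of the covariant model structure bites. For the trivial fibration $p: Z \to Y$ with section $s$: the homotopy $h: Z \times \Delta^1 \to Z$ over $Y$ between $sp$ and $\mathrm{id}_Z$ does not hand you a natural covariant equivalence for free. For $q: W \to Z$ it yields the zigzag $(W, sp\,q) \rightarrow (W\times\Delta^1,\, h\circ(q\times\mathrm{id})) \leftarrow (W, q)$ whose legs are the two endpoint inclusions of the cylinder; one is left anodyne, but the other is only right anodyne, and right anodyne maps need not be covariant equivalences over a general base (e.g.\ $\{1\} \to \Delta^1$ over $\Delta^1$ is a map between covariantly fibrant objects that is not a covariant equivalence). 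The simplicial enrichment does not rescue this, because the structure map of the cylinder here is $h\circ(q\times\mathrm{id})$, not the projection. One must upgrade $h$ to a homotopy along the freely invertible interval $J$ (possible since $p$ is a Joyal trivial fibration) and know that both vertex inclusions into $J$ are left anodyne, or else detect covariant equivalences by mapping into left fibrations. For the trivial cofibration $j: X \to Z$, your fiberwise reasoning misses the point: the fiber of $j^*E$ over $x$ is literally equal to the fiber of $E$ over $j(x)$ for \emph{any} map $j$, so that observation carries no content, and ``a left fibration is conservative on the fibers'' does not address either of the two things actually needed. These are: (i) that the counit $j_!j^*E = E\times_Z X \to E$ is a covariant equivalence over $Z$, which concerns the fibers of $E$ over vertices \emph{not} in the image of $j$ and requires, for instance, that pullback along a left fibration preserves categorical equivalences (smoothness); and (ii) that the derived unit $A \to j^*(Rj_!A)$ is a covariant equivalence over $X$ and not merely over $Z$, for which one needs a detection statement such as: a map of left fibrations over $X$ inducing weak equivalences on all fibers is a covariant equivalence. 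That statement is itself one of the substantial theorems of part I, and it is the real content hiding behind your sentence ``this is where the weight of the argument sits.'' As written, the proposal correctly locates the hard point but does not supply an argument for it.
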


\begin{proposition}
\label{prop:invprojective}
For an equivalence of simplicial categories $F: \mathbf{A} \rightarrow \mathbf{B}$ (in the Bergner model structure), the corresponding adjoint pair
\[
\xymatrix{
F_!: \mathbf{sSets}^{\mathbf{A}} \ar@<.5ex>[r] & \mathbf{sSets}^{\mathbf{B}}: F^* \ar@<.5ex>[l],
}
\]
is a Quillen equivalence for the projective model structures.
\end{proposition}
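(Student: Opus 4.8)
The plan is to verify directly that $(F_!, F^*)$ is a Quillen adjunction and then that it is a Quillen equivalence, reducing everything to the case where $F$ is a trivial fibration by factoring an arbitrary equivalence $F$ as a trivial cofibration followed by a trivial fibration in the Bergner model structure. Since equivalences of simplicial categories compose and satisfy two-out-of-three, and a composite of Quillen equivalences is a Quillen equivalence, it suffices to treat these two cases separately.

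First I would check that $(F_!, F^*)$ is a Quillen pair for \emph{any} simplicial functor $F\colon \A \to \B$. This is immediate from the definition of the projective model structures: $F^*$ is defined by precomposition with $F$, hence preserves pointwise weak equivalences and pointwise fibrations (a fibration or weak equivalence in $\sSets^{\B}$ is tested on objects $F(a)$ among others), so $F^*$ is right Quillen. Equivalently, $F_!$ sends the generating (trivial) cofibrations $\B(b,-)\times\partial\Delta^n \to \B(b,-)\times\Delta^n$ — wait, rather the generators $\A(a,-)\times\partial\Delta^n \to \A(a,-)\times\Delta^n$ — to maps of the form $F_!\A(a,-)\times\partial\Delta^n \to F_!\A(a,-)\times\Delta^n$, and one computes $F_!(\A(a,-)) \cong \B(F(a),-)$ by the defining adjunction of the left Kan extension, so these are again generating (trivial) cofibrations; hence $F_!$ is left Quillen.

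For the equivalence statement, consider first the case where $F\colon \A \to \B$ is a trivial fibration in the Bergner model structure; in particular $F$ is surjective on objects and a local weak equivalence, i.e.\ $\A(a,a') \to \B(F(a), F(a'))$ is a weak homotopy equivalence of simplicial sets for all $a, a'$. In this case I claim both the derived unit and derived counit are isomorphisms. For the counit: for a cofibrant $G \in \sSets^{\B}$, one wants $F_! F^* G \to G$ to be a pointwise weak equivalence, and since every object of $\B$ is $F(a)$ for some $a$, it is enough to evaluate at such objects; here $F_! F^* G(F(a))$ is computed by a coend/bar construction over $\A$ which, because $F$ is a local equivalence and surjective on objects, maps by a weak equivalence to the corresponding bar construction over $\B$ computing $G(F(a))$ — this is where the cofibrancy of $G$ and the standard homotopy invariance of the bar resolution for the projective model structure are used. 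The derived unit $G \to F^* \mathbf{R} F_! F_!^{-1}$... more precisely, for cofibrant $H \in \sSets^{\A}$ one checks $H \to F^* F_! H$ is a pointwise weak equivalence by the same bar-construction comparison. The case where $F$ is a trivial cofibration is handled dually, or simply by invoking the retract argument together with the already-established trivial-fibration case: a trivial cofibration $F$ admits, after factoring $\mathrm{id}_{\B}$, a relation to trivial fibrations making $F_!$ an equivalence on homotopy categories by two-out-of-six.

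The main obstacle I expect is the homotopy-invariance input for the bar construction: one must know that the (derived) left Kan extension $\mathbf{L}F_!$ along a local equivalence that is surjective on objects is computed correctly and compares well with $\mathbf{L}G_!$ for the other factor, i.e.\ that the functorial bar resolution $B(\B(F-,?), \A, H)$ is homotopy invariant in the simplicial category variable when the comparison functor is a local weak equivalence. This is a version of the statement that the projective model structure on diagram categories is functorial along Dwyer–Kan equivalences of indexing categories, and the careful point is the handling of the set of objects (where $F$ need only be surjective, not bijective) — one resolves this by a cofinality/contractibility argument on the relevant comma categories, or by citing the known fact that $\sSets^{(-)}$ with the projective structure takes Bergner equivalences to Quillen equivalences, which is e.g.\ implicit in the treatment of diagram categories in \cite{htt}. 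I would present the bar-construction comparison in enough detail to make the object-surjectivity point explicit, as that is the only place the argument differs from the discrete-category case treated in \cite{leftfibrations}.
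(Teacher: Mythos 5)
The paper itself gives no proof of this proposition; it simply refers to Section A.3.3 of \cite{htt}. So your outline must be judged on its own merits, and as written it has a genuine gap. The Quillen adjunction part and the reduction of a general Bergner equivalence to a trivial cofibration followed by a trivial fibration are fine, and the trivial fibration case does work essentially as you say: the unit $H \to F^*F_!H$ is a pointwise weak equivalence on cofibrant $H$ by cell induction from the representables $\A(a,-)\otimes\Delta^n$, where it is the local equivalence $\A(a,-) \to \mathbf{B}(Fa,F-)$, and surjectivity of $F$ on objects makes it immediate that $F^*$ reflects (and preserves) pointwise weak equivalences; these two facts give the Quillen equivalence by the standard criterion, with no need for the bar-resolution comparison whose homotopy invariance you flag as the main obstacle.

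The gap is the trivial cofibration case. There is no ``dual'' argument, and the retract/two-out-of-six sketch is circular: a retraction $r$ of a trivial cofibration $j\colon \A \to \A'$ exists only if $\A$ is Bergner-fibrant, and in any case $r$ is merely a weak equivalence, so concluding anything about $\mathbf{L}(jr)_!$ presupposes the proposition being proved; the identity $\mathbf{L}r_!\circ\mathbf{L}j_!\cong\mathrm{id}$ alone only exhibits $\mathbf{L}j_!$ as admitting a left quasi-inverse, not as an equivalence. Likewise, your fallback of ``citing the known fact that $\sSets^{(-)}$ with the projective structure takes Bergner equivalences to Quillen equivalences'' is citing the statement itself. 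What is actually needed for a trivial cofibration $j$ (or indeed for any Bergner equivalence, which would make the factorization unnecessary) is that $j^*$ reflects weak equivalences between projectively fibrant objects. Since $j$ is essentially surjective on homotopy categories, this reduces to the lemma that a pointwise-Kan diagram $G$ on $\A'$ carries every equivalence $b \to b'$ in $\A'$ to a homotopy equivalence $G(b) \to G(b')$. That lemma is the real content of the essential-surjectivity step, and it is absent from your proposal.
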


The first proposition was already proved by Lurie in \cite{htt}, but his proof depends on the fact that $(r_!, r^*)$ is a Quillen equivalence, as in Theorem \ref{thm:main}. We provided an independent proof in part I of this paper \cite{leftfibrations}. The second proposition can be found in Section A.3.3 of \cite{htt}.

\section{Two Quillen pairs}
\label{sec:Qpairs}

In this section we discuss the adjoint pairs $(h_!, h^*)$ and $(r_!, r^*)$ and prove Propositions \ref{prop:h!} and \ref{prop:r!}. For $\mathbf{A}$ a simplicial category, we start by defining a simplicial functor
\begin{equation*}
h: \mathbf{A}^{\mathrm{op}} \longrightarrow \mathbf{sSets}/w^*\mathbf{A}.
\end{equation*}
For $x$ an object of $\mathbf{A}$, the value $h(x)$ is the map $x/w^*\mathbf{A} \rightarrow w^*\mathbf{A}$. Here, the simplicial set $x/w^*\mathbf{A}$ has as its $n$-simplices the $(n+1)$-simplices of $w^*\mathbf{A}$ whose 0'th vertex is $x$, with the obvious simplicial structure maps obtained from regarding the $(n+1)$-simplex as the join $\Delta^0 \star \Delta^n$. The map from this simplicial set to $w^*\mathbf{A}$ itself is given by $d_0$, i.e. the face map that omits the initial vertex. To make $h$ a simplicial functor, we should provide appropriate maps of simplicial sets $h_{x,y}$ as in the following diagram:
\[
\xymatrix{
\mathbf{A}(y,x) \times x/w^*\mathbf{A} \ar[rr]^{h_{x,y}} \ar[dr] && y/w^*\mathbf{A} \ar[dl] \\
& w^*\mathbf{A}. &
}
\]
Consider an $n$-simplex $\xi$ of the top left simplicial set, i.e. a map
\begin{equation*}
\xi = (\xi_1, \xi_2): \Delta^n \longrightarrow \mathbf{A}(y,x) \times x/w^*\mathbf{A}.
\end{equation*}
Then $\xi_2$ corresponds to a map $\Delta^{n+1} \rightarrow w^*\mathbf{A}$ which sends the 0'th vertex to $x$ and, by adjunction, to a simplicial functor
\begin{equation*}
\overline{\xi}: w_!\Delta^{n+1} \longrightarrow \mathbf{A}.
\end{equation*}
We should define an $n$-simplex $h_{x,y}(\xi): \Delta^n \rightarrow y/w^*\mathbf{A}$, which corresponds to another simplicial functor
\begin{equation*}
H_{x,y}(\xi): w_!\Delta^{n+1} \longrightarrow \mathbf{A}.
\end{equation*}
On objects, it is given by
\begin{equation*}
H_{x,y}(\xi)(i) = \begin{cases}
y & \text{if } i=0 \\
\overline{\xi}(i) & \text{if } 1 \leq i \leq n+1.
\end{cases}
\end{equation*}
If $1\leq i \leq j \leq n+1$, we define 
\begin{equation*}
H_{x,y}(\xi)_{i,j}: w_!\Delta^{n+1}(i,j) \longrightarrow \mathbf{A}(H_{x,y}(\xi)(i), H_{x,y}(\xi)(j))
\end{equation*}
to be the same as 
\begin{equation*}
\overline{\xi}_{i,j}: w_!\Delta^{n+1}(i,j) \longrightarrow \mathbf{A}(\overline{\xi}(i), \overline{\xi}(j)).
\end{equation*}
In the case where $i = 0$ and $j>0$ we need to do more; here we take the composition of the maps
\begin{eqnarray*}
w_!\Delta^{n+1}(0,j) & \longrightarrow & w_!\Delta^{n+1}(0,j) \times w_!\Delta^{n+1}(0,j) \\
& \xrightarrow{(\pi_j,\,\overline{\xi}_{0,j})} & (C^n)^{\mathrm{op}} \times \mathbf{A}(x, \overline{\xi}(j)) \\
& \xrightarrow{(\mu, \,\mathrm{id})} & \Delta^n \times \mathbf{A}(x, \overline{\xi}(j)) \\
& \xrightarrow{(\xi_1, \,\mathrm{id})} & \mathbf{A}(y,x) \times \mathbf{A}(x, \overline{\xi}(j)) \\
& \longrightarrow & \mathbf{A}(y, \overline{\xi}(j)).
\end{eqnarray*}
Here the first map is the diagonal, while the maps $\pi_j$ and $\mu$ have been defined in Section \ref{sec:cubes}. The last map is composition in the simplicial category $\mathbf{A}$. One verifies that the maps defined in this way make $H_{x,y}(\xi)$ into a simplicial functor.

Now that we have the simplicial functor $h: \mathbf{A}^{\mathrm{op}} \rightarrow \mathbf{sSets}/w^*\mathbf{A}$, constructing the functor
\begin{equation*}
h_!: \mathbf{sSets}^{\mathbf{A}} \longrightarrow \mathbf{sSets}/w^*\mathbf{A}
\end{equation*}
is a purely formal matter; indeed, it may be defined as the \emph{simplicial} left Kan extension of $h$ along the simplicial Yoneda embedding
\begin{equation*}
y: \mathbf{A}^{\mathrm{op}} \longrightarrow \mathbf{sSets}^{\mathbf{A}}.
\end{equation*}
Concretely, for a simplicial functor $F: \mathbf{A} \rightarrow \mathbf{sSets}$, we have
\begin{equation*}
h_! F = h \otimes_{\mathbf{A}} F.
\end{equation*}
This tensor product is formed by regarding $F$ as a left module and $h$ as a right module over $\mathbf{A}$ and may be computed as the coequalizer of the following diagram of simplicial sets over $w^*\mathbf{A}$:
\[
\xymatrix{
\coprod_{a,b \in \mathbf{A}} h(b) \otimes (\mathbf{A}(a,b) \times F(a)) \ar@<.5ex>[r] \ar@<-.5ex>[r] & \coprod_{a \in \mathbf{A}} h(a) \otimes F(a).
}
\]
Here the tensor symbol refers to the tensoring of the category $\mathbf{sSets}/w^*\mathbf{A}$ over the category $\mathbf{sSets}$ of simplicial sets, as explained in the previous section. The two arrows arise from the left module structure of $F$ and the right module structure of $h$ respectively. From this formula it is immediately clear that $h_!$ itself is a simplicial functor, in the sense that there are natural isomorphisms
\begin{equation*}
h_!(F \otimes M) \simeq h_!(F) \otimes M
\end{equation*}
for any simplicial functor $F: \mathbf{A} \rightarrow \mathbf{sSets}$ and simplicial set $M$. Also, by formal reasoning, the functor $h_!$ admits a right adjoint $h^*$.

\begin{proof}[Proof of Proposition \ref{prop:h!}]
To show that $h_!$ is left Quillen, we should check that it sends the generating cofibrations
\begin{equation*}
\mathbf{A}(a,-) \times \partial \Delta^n \longrightarrow \mathbf{A}(a,-) \times \Delta^n \quad\quad \text{for } n \geq 0 \text{ and } a \in \mathbf{A}
\end{equation*}
and generating trivial cofibrations
\begin{equation*}
\mathbf{A}(a,-) \times \Lambda_k^n \longrightarrow \mathbf{A}(a,-) \times \Delta^n \quad\quad \text{for } 0 \leq k \leq n \text{ and } a \in \mathbf{A}
\end{equation*}
to cofibrations (resp. trivial cofibrations) in the covariant model structure on $\mathbf{sSets}/w^*\mathbf{A}$. We just observed that $h_!$ is simplicial and sends these to the maps
\begin{equation*}
a/w^*\mathbf{A} \otimes \partial \Delta^n \longrightarrow a/w^*\mathbf{A} \otimes \Delta^n
\end{equation*}
and
\begin{equation*}
a/w^*\mathbf{A} \otimes \Lambda_k^n \longrightarrow a/w^*\mathbf{A} \otimes \Delta^n
\end{equation*}
respectively. These are indeed cofibrations (resp. trivial cofibrations), simply because the covariant model structure is simplicial. To verify the claim about homotopy colimits, observe that the composition
\begin{equation*}
\mathbf{sSets}^{\mathbf{A}} \xrightarrow{h_!} \mathbf{sSets}/w^*\mathbf{A} \rightarrow \mathbf{sSets}
\end{equation*}
preserves homotopy colimits (since both functors are left Quillen) and furthermore sends every representable functor $\mathbf{A}(a,-)$ to a weakly contractible simplicial set (namely $a/w^*\mathbf{A}$). But these two facts characterize the homotopy colimit functor up to weak equivalence.
\end{proof}

Before proving Proposition \ref{prop:r!}, we observe the following naturality properties of the functor $r_!$. First of all, it follows straightforwardly from the definitions that for a map of simplicial sets $f: Y \rightarrow Z$ the square
\[
\xymatrix{
\mathbf{sSets}/Y \ar[r]^{r_!}\ar[d]_{f_!} & \mathbf{sSets}^{w_! Y} \ar[d]^{(w_! f)_!} \\
\mathbf{sSets}/Z \ar[r]_{r_!} & \mathbf{sSets}^{w_! Z}
}
\]
commutes up to natural isomorphism. Here the left vertical functor is given by composition with $f$, whereas the right vertical functor is the pushforward along the simplicial functor $w_!f$. The horizontal functors are both given by $r_!$, the top one applied to simplicial sets over $Y$, the bottom one to simplicial sets over $Z$. In particular, for a commutative square of simplicial sets
\[
\xymatrix{
U \ar[r]\ar[d]_p & V \ar[d]^q \\
Y \ar[r]_f & Z
}
\]
we obtain a map
\begin{equation*}
r_!(p) \longrightarrow (w_!f)^*r_!(q)
\end{equation*}
of simplicial diagrams on $w_!Y$.

\begin{proof}[Proof of Proposition \ref{prop:r!}]
To check that $r_!$ is left Quillen, it will suffice to check that it sends maps in $\mathbf{sSets}/X$ of the form
\[
\xymatrix{
\partial \Delta^n \ar[rr]\ar[dr] && \Delta^n \ar[dl]^p \\
& X &
}
\]
to cofibrations and, for $0 \leq k < n$, maps of the form
\[
\xymatrix{
\Lambda_k^n \ar[rr]\ar[dr] && \Delta^n \ar[dl]^p \\
& X &
}
\]
to trivial cofibrations (see \cite{leftfibrations}). By the naturality of $r_!$ discussed above and the fact that $(w_!p)_!$ is left Quillen, we may reduce our verifications to the case where $X = \Delta^n$.

Let us first consider the effect of $r_!: \mathbf{sSets}/\Delta^n \rightarrow \mathbf{sSets}^{w_!\Delta^n}$ on the inclusion $i: \partial \Delta^n \rightarrow \Delta^n$, where domain and codomain are considered as simplicial sets over $\Delta^n$ in the obvious way. For each $m < n$ the map
\begin{equation*}
r_!(\partial \Delta^n)(m) \rightarrow r_!(\Delta^n)(m)
\end{equation*}
is an isomorphism.  At the final vertex, the map
\begin{equation*}
r_!(\partial \Delta^n)(n) \rightarrow r_!(\Delta^n)(n) \simeq \prod_{0 \leq m < n} I
\end{equation*}
is the inclusion of the union of the subcubes
\begin{equation*}
\prod_{0 \leq m < k} I \times \{-\} \times \prod_{k < m < n} I \quad \text{for } 0 \leq k \leq n-1,
\end{equation*}
where the $k$'th cube is contributed by $r_!(d_k\Delta^n)(n)$, and
\begin{equation*}
\prod_{0 \leq m < k} I \times \{+\} \times \prod_{k < m < n} I \quad \text{for } 0 \leq k \leq n-1,
\end{equation*}
which are all contained in $r_!(d_n\Delta^n)(n)$. In other words, this map is the inclusion of all the faces of the cube $r_!(\Delta^n)(n)$. In particular, this map is a cofibration of simplicial sets, for which we write $j$. Then $r_!(i)$, being an isomorphism at each object of $w_!\Delta^n$ except the last, is a pushout of the map $w_!\Delta^n(n,-) \otimes j$ and hence a projective cofibration.

Similarly, consider the horn inclusion $i: \Lambda_k^n \rightarrow \Delta^n$ for $0 \leq k < n$. Again, for each $m < n$, the map
\begin{equation*}
r_!(\Lambda_k^n)(m) \rightarrow r_!(\Delta^n)(m)
\end{equation*}
is an isomorphism. At the final vertex,
\begin{equation*}
r_!(\Lambda_k^n)(n) \rightarrow r_!(\Delta^n)(n) \simeq \prod_{0 \leq m < n} I
\end{equation*}
is now the inclusion of the union of the subcubes
\begin{equation*}
\prod_{0 \leq m < l} I \times \{-\} \times \prod_{l < m < n} I \quad \text{for } 0 \leq l \leq n-1 \text{ and } l \neq k
\end{equation*}
and
\begin{equation*}
\prod_{0 \leq m < l} I \times \{+\} \times \prod_{l < m < n} I \quad \text{for } 0 \leq l \leq n-1.
\end{equation*}
Write $j: K \rightarrow \prod_{0 \leq m < n} I$ for this inclusion. Again, $r_!(i)$ is a pushout of the map $w_!\Delta^n(n,-) \otimes j$, so we should show that $j$ is a trivial cofibration of simplicial sets. It is clearly a cofibration; we will conclude the proof by showing that $K$ is weakly contractible. To see this, note that the projection
\begin{equation*}
\prod_{0 \leq m < n} I \longrightarrow \prod_{0 \leq m < k} I \times \{+\} \times \prod_{k < m < n} I 
\end{equation*}
is part of an evident deformation retraction, which restricts to a deformation retraction of $K$ onto $ \prod_{0 \leq m < k} I \times \{+\} \times \prod_{k < m < n} I$. In particular, $K$ is indeed weakly contractible.
\end{proof}


\section{Proof of Proposition \ref{prop:h!leftinv}}

We will prove Proposition \ref{prop:h!leftinv} by first giving a natural transformation $\nu$ from the functor
\begin{equation*}
r_! \circ h_!: \mathbf{sSets}^{\mathbf{A}} \longrightarrow \mathbf{sSets}^{w_!w^*\mathbf{A}}
\end{equation*}
to the functor
\begin{equation*}
\varepsilon^*: \mathbf{sSets}^{\mathbf{A}} \longrightarrow \mathbf{sSets}^{w_!w^*\mathbf{A}}.
\end{equation*}
Subsequently we prove that the adjoint natural transformation
\begin{equation*}
\bar{\nu}: \varepsilon_! \circ r_! \circ h_! \longrightarrow \mathrm{id}
\end{equation*}
is a weak equivalence when evaluated on projectively cofibrant objects. It follows that the derived functor $\mathbf{L}r_! \circ \mathbf{L}h_!$ is a right quasi-inverse to $\mathbf{L}\varepsilon_!$; the latter functor is an equivalence of categories with quasi-inverse $\mathbf{R}\varepsilon^*$, so it follows that the derived functors $\mathbf{L}h_! \circ \mathbf{L}r_!$ and $\mathbf{R}\varepsilon^*$ must be naturally isomorphic.

Since $r_! \circ h_!$ preserves colimits and every object of $\mathbf{sSets}^\mathbf{A}$ can canonically be expressed as a colimit of representable functors of the form $\mathbf{A}(a,-) \times \Delta^n$, it suffices to define $\nu$ on these. Let us first consider the case $n=0$. By definition,
\begin{equation*}
h_! \mathbf{A}(a,-) = (p_a: a/w^*\mathbf{A} \rightarrow w^*\mathbf{A}).
\end{equation*}
The map of simplicial sets $p_a$ in turn is a colimit over the simplices $\xi: \Delta^n \rightarrow a/w^*\mathbf{A}$ of the maps $p_a \xi: \Delta^n \rightarrow w^*\mathbf{A}$. Therefore, for an object $b \in \mathbf{A}$, the simplicial set $r_!p_a(b)$ is a colimit over those simplices $\xi$ of the following simplicial sets:
\begin{equation*}
r_!(p_a \xi)(b) = w_!(p_a \xi)^*\bigl(w_!w^*\mathbf{A}(-,b)\bigr) \otimes_{w_!\Delta^n} w_!(\Delta^{n\,\triangleleft})(\infty,-).
\end{equation*}
The map $\xi$ determines by adjunction a simplicial functor
\begin{equation*}
\bar{\xi}: w_!(\Delta^{n\,\triangleleft}) \longrightarrow \mathbf{A} 
\end{equation*}
which sends the initial vertex $-\infty$ to $a$. Using this functor together with the isomorphism
\begin{equation*}
r_!(p_a \xi)(b) \simeq w_!w^*\mathbf{A}(-,b) \otimes_{w_!w^*\mathbf{A}} w_!(p_a\xi)_!\bigl(w_!(\Delta^{n\,\triangleleft})(-\infty,-)\bigr)
\end{equation*}
and subsequently applying $\varepsilon_!$ we find natural maps
\begin{eqnarray*}
r_!(p_a \xi)(b) & \longrightarrow & \mathbf{A}(-,b) \otimes_{\mathbf{A}} \mathbf{A}(a,-) \\
& \simeq & \mathbf{A}(a,b).
\end{eqnarray*}
Passing to colimits, we have thus defined the required natural transformation
\begin{equation*}
\nu(a): r_!h_!\mathbf{A}(a,-) \longrightarrow \varepsilon^*\mathbf{A}(a,-).
\end{equation*}
To treat the general case
\begin{equation*}
\nu(a,n): r_!h_!\bigl(\mathbf{A}(a,-) \otimes \Delta^n \bigr) \longrightarrow \varepsilon^*\mathbf{A}(a,-) \otimes \Delta^n,
\end{equation*}
note first that, because $h_!$ is compatible with the simplicial structure, we have 
\begin{equation*}
r_!h_!\bigl(\mathbf{A}(a,-) \otimes \Delta^n \bigr) \simeq r_!\bigl(p_a \otimes \Delta^n \bigr).
\end{equation*}
For a moment let us carry along the `base' in the notation for $r_!$, i.e. write
\begin{equation*}
r_!^{w^*\mathbf{A}}: \mathbf{sSets}/w^*\mathbf{A} \rightarrow \mathbf{sSets}^{w_!w^*\mathbf{A}} \quad\quad \text{and} \quad\quad r_!^{\Delta^0}: \mathbf{sSets} \rightarrow \mathbf{sSets}. 
\end{equation*}
Then by the naturality properties of $r_!$ discussed just before the proof of Proposition \ref{prop:r!}, the projections onto the two factors give natural maps
\begin{eqnarray*}
r^{w^*\mathbf{A}}_!\bigl(p_a \otimes \Delta^n \bigr) & \longrightarrow & r_!^{w^*\mathbf{A}}(p_a) \otimes r_!^{\Delta^0}(\Delta^n) \\ 
& \xrightarrow{(\nu(a),\mu)} &  \varepsilon^*\mathbf{A}(a,-)\otimes \Delta^n, 
\end{eqnarray*}
where $\nu(a)$ is as defined above and $\mu$ is defined as in Section \ref{sec:cubes}. This completes the definition of $\nu$.

\begin{proof}[Proof of Proposition \ref{prop:h!leftinv}]
As explained at the beginning of this section, we should prove that $\bar{\nu}: \varepsilon_! r_! h_! \rightarrow \mathrm{id}$ is a weak equivalence when evaluated on projectively cofibrant objects. Since all functors involved are left Quillen, it follows by the usual arguments on skeletal filtrations of cofibrant objects that it suffices to prove this statement on representables of the form $\mathbf{A}(a,-) \otimes \Delta^n$. Consider the inclusion $0: \Delta^0 \rightarrow \Delta^n$ and the induced square
\[
\xymatrix{
\varepsilon_! r_! h_!(\mathbf{A}(a,-) \otimes \Delta^0) \ar[d]\ar[r]^-{\bar{\nu}(a,0)} & \mathbf{A}(a,-) \otimes \Delta^0 \ar[d] \\
\varepsilon_! r_! h_!(\mathbf{A}(a,-) \otimes \Delta^n) \ar[r]_-{\bar{\nu}(a,n)} & \mathbf{A}(a,-) \otimes \Delta^n.
}
\]
The right vertical map is a trivial cofibration (since the projective model structure is simplicial), so the left vertical map is a trivial cofibration as well (since $\varepsilon_! r_! h_!$ is left Quillen). Therefore, to prove that $\bar{\nu}(a,n)$ is a weak equivalence, it suffices to prove that $\bar{\nu}(a,0)$ is a weak equivalence. Consider the following maps of simplicial sets:
\[
\xymatrix{
\Delta^0 \ar[dr]_a \ar[rr] && a/w^*\mathbf{A} \ar[dl] \\
& w^*\mathbf{A} &
}
\]
According to Lemma 2.4 of \cite{leftfibrations}, the horizontal map is a weak equivalence in the covariant model structure over $w^*\mathbf{A}$. Now consider the following commutative diagram:
\[
\xymatrix{
\varepsilon_! r_!(\Delta^0 \xrightarrow{a} w^*\mathbf{A}) \ar[r]\ar[dr] & \varepsilon_! r_!(a/w^*\mathbf{A} \rightarrow w^*\mathbf{A}) \ar[d]^{\bar{\nu}(a,0)} \\
& \mathbf{A}(a,-).
}
\]
The horizontal arrow is a weak equivalence in the projective model structure since $\varepsilon_!r_!$ is left Quillen; furthermore, the reader will easily verify that the slanted arrow is an isomorphism. By two-out-of-three, $\bar{\nu}(a,0)$ is a weak equivalence.
\end{proof}

\section{Proof of Proposition \ref{prop:h!rightinv}}

Much as in Part I of this paper, we will prove Proposition \ref{prop:h!rightinv} by exhibiting a zigzag of natural transformations
\begin{equation*}
h_!r_! \xrightarrow{\gamma} L \xleftarrow{\iota} \eta_!
\end{equation*}
between functors from $\mathbf{sSets}/X$ to $\mathbf{sSets}/w^*w_!X$ and subsequently proving that both are covariant weak equivalences over $w^*w_!X$. 

We will define the natural transformation $\gamma: h_!r_! \rightarrow L$ as a composition of natural transformations
\begin{equation*}
h_!r_! \xrightarrow{\zeta} Z \xrightarrow{\zeta'} L.
\end{equation*}
Informally speaking, a simplex of $L$ has the shape of a ladder in $w^*w_!X$, whereas a simplex of $Z$ traces out a `$Z$-shape' in that ladder. To understand the origin of the definitions we give in this section, the reader might find it helpful to compare them to the definitions we made in Part I.

To describe the relevant functors, first define a simplicial category $S^n$ by the following pushout square:
\[
\xymatrix{
w_!\Delta^0 \ar[r]^{n}\ar[d]_{0} & w_!\Delta^n \ar[d] \\
w_!\Delta^{n+1} \ar[r] & S^n.
}
\]
Reading $\Delta^{n+1}$ as $\Delta^0 \star \Delta^n$, the definition of $S^n$ is natural in $[n]$ in an evident manner, so that $S^\bullet$ is a cosimplicial object in simplicial categories. Note also that there is an inclusion of simplicial categories
\begin{equation*}
S^n \longrightarrow w_!\Delta^{2n+1},
\end{equation*}
coming from the two inclusions of the `segments' $\Delta^n \simeq \Delta^{\{0, \ldots, n\}} \subseteq \Delta^{2n+1}$ and $\Delta^{n+1} \simeq \Delta^{\{n, \ldots, 2n+1\}} \subseteq \Delta^{2n+1}$. Informally speaking, the difference between $S^n$ and $w_!\Delta^{2n+1}$ is that there is no `time' $t_n$ assigned to the object $n$ of $S_n$; the inclusion assigns to this vertex the coordinate $+$.

We need another simplicial category $T^n$, defined by the following pullback square:
\[
\xymatrix{
T^n \ar[r]\ar[d] & S^n \ar[d] \\
w_!(\Delta^1 \times \Delta^n) \ar[r] & w_!\Delta^{2n+1}.
}
\]
Here the bottom horizontal functor comes from the inclusion $\Delta^1 \times \Delta^n \rightarrow \Delta^{2n+1}$ which sends the vertex $(i,j)$ to $(n+1)i + j$. Again, $T^\bullet$ is a cosimplicial object in simplicial categories in an evident way. For a map of simplicial sets $p: Y \rightarrow X$, we can now define the simplicial sets $Z(p)$ and $L(p)$ by the following pullback squares:
\[
\xymatrix{
Z(p)_n \ar[d]\ar[r] & \mathrm{Hom}(S^n, w_! X) \ar[d] & L(p)_n \ar[d]\ar[r] & \mathrm{Hom}(T^n, w_! X) \ar[d] \\
Y_n \ar[r] & \mathrm{Hom}(w_!\Delta^n, w_!X), & Y_n \ar[r] & \mathrm{Hom}(w_!\Delta^n, w_! X).
}
\]
In these squares, the right vertical maps are induced by the inclusions $w_!\Delta^n \simeq w_!\Delta^{\{0, \ldots, n\}} \rightarrow S^n$ and $w_!\Delta^n \simeq w_!(\{0\} \times \Delta^n) \rightarrow T^n$. Thus, one can think of the $n$-simplices of $Z(p)$ (resp. $L(p)$) as corresponding to simplicial functors $S^n \rightarrow w_!X$ (resp. $T^n \rightarrow w_!X$) with a lift of the initial segment $\Delta^{\{0, \ldots, n\}}$ (resp. $\{0\} \times \Delta^n)$) to $Y$.

Of course there are also functors
\begin{equation*}
w_!\Delta^n \simeq w_!(\{1\} \times \Delta^n) \longrightarrow T^n \longrightarrow S^n.
\end{equation*}
These induce maps
\begin{equation*}
\mathrm{Hom}(S^n, w_! X) \longrightarrow \mathrm{Hom}(T^n, w_! X) \longrightarrow \mathrm{Hom}(w_!\Delta^n, w_!X) = (w^*w_!X)_n
\end{equation*}
and consequently we find maps
\begin{equation*}
Z(p) \xrightarrow{\zeta'} L(p) \rightarrow w^*w_!X,
\end{equation*}
also giving $Z(p)$ and $L(p)$ the structure of simplicial sets over $w^*w_!X$. We should still define the maps $\iota$ and $\zeta$ in the following diagram:
\[
\xymatrix{
h_!r_!(p) \ar[drr]\ar[r]^\zeta & Z(p) \ar[r]^{\zeta'} & L(p) \ar[d] & \eta_!(p)\ar[l]_{\iota} \ar[dl] \\
&& w^*w_!X. &
}
\]
For $\iota$, we use the functor $T^n \rightarrow w_!\Delta^n$ induced by the projection $\Delta^1 \times \Delta^n \rightarrow \Delta^n$. Indeed, this map allows us to form the composition
\begin{equation*}
Y_n \longrightarrow \mathrm{Hom}(w_!\Delta^n, w_!X) \longrightarrow \mathrm{Hom}(T^n, w_!X)
\end{equation*}
which in turn gives the required $\iota$. Indeed, from the pullback square defining $L(p)$, we find a map of simplicial sets $Y \rightarrow L(p)$, and it is easily verified that it is compatible with the maps of both these simplicial sets to $w^*w_!X$. 

Finally, to define $\zeta$ we proceed as follows. Since $h_!r_!$ preserves colimits, it suffices to treat the case where $Y$ is a simplex $\Delta^n$, so that $p$ is a map $\Delta^n \rightarrow X$. Recall that 
\begin{equation*}
r_!(p)(x) = (w_!p)^*\bigl(w_!X(-,x)\bigr) \otimes_{w_!\Delta^n} w_!(\Delta^{n\,\triangleleft})(-\infty,-).
\end{equation*}
Write $R$ for the composition of simplicial functors
\begin{equation*}
(w_!\Delta^n)^{\mathrm{op}} \xrightarrow{w_!p} (w_!X)^{\mathrm{op}} \xrightarrow{h} \mathbf{sSets}/w^*w_!X.
\end{equation*}
In particular, note that
\begin{equation*}
R(i) = \bigl(p(i)/w^*w_!X \longrightarrow w^*w_!X \bigr).
\end{equation*}
Since $h_!$ is simplicial and commutes with colimits, it also commutes with tensoring with left modules over a simplicial category. We therefore have
\begin{equation*}
h_!r_!(p) = R(-) \otimes_{w_!\Delta^n} w_!(\Delta^{n\,\triangleleft})(-\infty, -).
\end{equation*}
Recall from Section \ref{sec:Qpairs} the definition of this tensor product as a coequalizer; in particular, $h_!r_!(p)$ is a quotient of the simplicial set
\begin{equation*}
P := \coprod_{0 \leq i \leq n} p(i)/w^*w_!X \otimes w_!(\Delta^{n\,\triangleleft})(-\infty, i).
\end{equation*}
As in Section \ref{sec:cubes}, there are maps $w_!(\Delta^n)^{\triangleleft}(-\infty,i) \xrightarrow{\pi_i} (C^n)^{\mathrm{op}} \xrightarrow{\mu} \Delta^n$. Furthermore, this composition clearly factors through the initial segment $\Delta^i \simeq \Delta^{\{0, \ldots, i\}} \subseteq \Delta^n$. Thus we obtain a map
\begin{equation*}
P \longrightarrow \coprod_{0 \leq i \leq n} p(i)/w^*w_!X \otimes \Delta^i.
\end{equation*}
Consider a $k$-simplex $\xi = (\xi_1, \xi_2)$ of this simplicial set and assume that $\xi_2: \Delta^k \rightarrow \Delta^i$ is surjective (without loss of generality; any other simplex is a face of such). Then $\xi_1$ gives a simplicial functor $w_!\Delta^{k+1} \rightarrow w_!X$ sending $0$ to $p(i)$ and from $\xi_2$ we may form a composition of simplicial functors
\begin{equation*}
w_!\Delta^k \xrightarrow{w_!\xi_2} w_!\Delta^i \xrightarrow{w_!p} w_!X
\end{equation*}
which (by surjectivity of $\xi_2$) sends the object $k$ of $w_!\Delta^k$ to $p(i)$. To apply $w_!p$ we have again regarded $\Delta^i$ as an initial segment of $\Delta^n$. These two functors may then be amalgamated to obtain a simplicial functor
\begin{equation*}
w_!\Delta^k \amalg_{w_!\Delta^0} w_!\Delta^{k+1} = S^k \longrightarrow w_!X.
\end{equation*}
This procedure defines a map of simplicial sets
\begin{equation*}
P \longrightarrow Z(p)
\end{equation*}
and one readily verifies that it descends to the quotient of $P$ defining $h_!r_!(p)$, thus giving the desired map $\zeta$.

Before proving Proposition \ref{prop:h!rightinv} let us note some formal properties of our constructions. Also recall from part I of this paper the notion of a \emph{covariant deformation retract}, which we will use below.

\begin{lemma}
\label{lem:LZformal}
The functors $L$ and $Z$ preserve colimits and cofibrations.
\end{lemma}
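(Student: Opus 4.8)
The plan is to exhibit $Z$ and $L$ as composites of functors between slice categories of $\mathbf{sSets}$ each of which manifestly preserves colimits and monomorphisms, and then to invoke that the cofibrations of the covariant model structure, on $\mathbf{sSets}/X$ as on $\mathbf{sSets}/w^*w_!X$, are precisely the monomorphisms.

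First I would repackage the definition of $Z$. Since $S^\bullet$ is a cosimplicial object in simplicial categories, the assignment $[n] \mapsto \mathrm{Hom}(S^n, w_!X)$ defines a simplicial set $W$; the restrictions along the inclusions $w_!\Delta^{\{0,\ldots,n\}} \to S^n$ assemble into a map $g: W \to w^*w_!X$, and the restrictions along the composites $w_!\Delta^n \simeq w_!(\{1\} \times \Delta^n) \to T^n \to S^n$ recalled just before the statement assemble into a second map $s: W \to w^*w_!X$, which is the structure map. Unwinding the defining pullback square and using the adjunction $(w_!, w^*)$, the map $Y_n \to \mathrm{Hom}(w_!\Delta^n, w_!X)$ appearing in it is the map on $n$-simplices of $\eta \circ p: Y \to w^*w_!X$ (with $\eta: X \to w^*w_!X$ the unit), so the levelwise pullbacks defining $Z(p)$ combine into the single pullback $Y \times_{w^*w_!X} W$ of simplicial sets, compatibly with all structure maps. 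In other words, $Z$ is the composite
\[
\mathbf{sSets}/X \xrightarrow{\ \eta_!\ } \mathbf{sSets}/w^*w_!X \xrightarrow{\ g^*\ } \mathbf{sSets}/W \xrightarrow{\ s_!\ } \mathbf{sSets}/w^*w_!X,
\]
where $\eta_!$ and $s_!$ denote post-composition with $\eta$ and $s$ and $g^*$ denotes pullback along $g$. The functor $L$ has an identical description, with $T^\bullet$, the simplicial set $[n] \mapsto \mathrm{Hom}(T^n, w_!X)$, and the appropriate pair of maps to $w^*w_!X$ in place of $S^\bullet$, $W$ and $(g, s)$.

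It then remains to note that each of the three functors preserves colimits and monomorphisms. A post-composition functor $f_!$ is left adjoint to the pullback functor $f^*$, hence preserves colimits; and since it leaves the underlying simplicial set and map unchanged, it preserves monomorphisms. Pullback $g^*$ along a fixed map of simplicial sets is a left adjoint — it has as right adjoint the dependent product along $g$ — hence preserves colimits, and a pullback of a monomorphism is a monomorphism. Composing, $Z$ and $L$ preserve colimits and monomorphisms, that is, colimits and cofibrations. I do not expect any real difficulty here; the one slightly delicate point is the identification of the levelwise pullbacks $Z(p)_n$, $L(p)_n$ with the simplicial-set pullbacks $Y \times_{w^*w_!X} W$, which follows immediately from the cosimplicial structures on $S^\bullet$ and $T^\bullet$.
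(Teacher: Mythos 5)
Your proof is correct and follows essentially the same route as the paper: the paper's own (much terser) proof simply observes that $Z(p)$ and $L(p)$ are pullbacks along fixed maps, and that pullbacks in simplicial sets preserve colimits and monomorphisms. Your explicit factorization as $s_!\circ g^*\circ \eta_!$ and the remark that covariant cofibrations are exactly the monomorphisms just make the same argument more precise.
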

\begin{proof}
Preservation of colimits is a direct consequence of the fact that pullbacks in simplicial sets preserve colimits. Similarly, pullbacks preserve monomorphisms, proving the claim about cofibrations. 
\end{proof}

\begin{proof}[Proof of Proposition \ref{prop:h!rightinv}]
Consider a map of simplicial sets $p: Y \rightarrow X$. We will prove that the maps
\begin{equation*}
h_!r_!(p) \xrightarrow{\gamma} L(p) \xleftarrow{\iota} \eta_!(p)
\end{equation*}
are both covariant weak equivalences over $w^*w_!X$. First we prove that $\iota$ is a covariant deformation retract. The retraction $r: L(p) \rightarrow \eta_!(p)$ is defined by precomposing functors $T^n \rightarrow w_!X$ with the functor
\begin{equation*}
w_!\Delta^n \simeq w_!(\{0\} \times \Delta^n) \longrightarrow T^n.
\end{equation*}
This functor is a section of the functor $T^n \rightarrow w_!\Delta^n$ used to define $\iota$, so that $r\iota = \mathrm{id}$ as required. It remains to provide a map
\begin{equation*}
k: L(p) \times \Delta^1 \longrightarrow L(p)
\end{equation*}
that exhibits a homotopy from $\iota r$ to $\mathrm{id}_{L(p)}$. Let $\xi = (\xi_1,\xi_2): \Delta^n \rightarrow L(p) \times \Delta^1$ be an $n$-simplex of the domain of $k$. Then $\xi_2$ defines a self-map $\lambda_{\xi_2}: \Delta^1 \times \Delta^n \rightarrow \Delta^1 \times \Delta^n$, which is determined by its effect on vertices as follows:
\begin{equation*}
\lambda_{\xi_2}(i,j) = \begin{cases}
(i,j) & \text{if } \xi_2(j) = 1, \\
(0,j) & \text{otherwise.}
\end{cases}
\end{equation*}
Note that if $\xi_2$ is the constant function with value 0, then $\lambda_{\xi_2}$ is the projection $\Delta^1 \times \Delta^n \rightarrow \{0\} \times \Delta^n$, whereas if $\xi_2$ is constant with value $1$ then $\lambda_{\xi_2}$ is the identity. Also, $\lambda_{\xi_2}$ induces a simplicial functor
\begin{equation*}
w_!\lambda_{\xi_2}: w_!(\Delta^1 \times \Delta^n) \longrightarrow w_!(\Delta^1 \times \Delta^n)
\end{equation*}
which is easily seen to restrict to a simplicial functor $T^n \rightarrow T^n$, for which we write $\Lambda_{\xi_2}$. The $n$-simplex $\xi_1$ corresponds to a functor $\overline{\xi}_1: T^n \rightarrow w_!X$. We now define the $n$-simplex $k(\xi): \Delta^n \rightarrow L(p)$ to be the one corresponding to the composition of functors $\overline{\xi}_1 \circ \Lambda_{\xi_2}: T^n \rightarrow w_!X$. It is easily verified that the map $k$ defined in this way is indeed a homotopy from $\iota r$ to the identity of $L(p)$.

We will now verify that $h_!r_!(p) \xrightarrow{\gamma} L(p)$ is a covariant weak equivalence. The functors $h_!r_!$ and $L$ both preserve colimits and cofibrations, so the usual `cube lemma' and an induction on the skeletal filtration of $Y$ (as in the proof of Proposition 5.5 of \cite{leftfibrations}) show that it suffices to treat the case of a simplex $p: \Delta^n \rightarrow X$. The inclusion $\Delta^0 \xrightarrow{0} \Delta^n$ is a covariant weak equivalence, which is preserved by $h_!r_!$ (since it is left Quillen) and by $L$ (which is also left Quillen, since we proved it is weakly equivalent to $\eta_!$). Therefore we may further reduce to the case of a vertex $x: \Delta^0 \rightarrow X$. Note that $\eta_!(x)$ is just the map $x: \Delta^0 \rightarrow w^*w_!X$ corresponding to the object $x$ of the simplicial category $w_!X$. Also observe that $h_!r_!(x) = (x/w^*w_!X \rightarrow w^*w_!X)$. Clearly there is a vertex $\mathrm{id}_x: \Delta^0 \rightarrow x/w^*w_!X$, corresponding to the identity morphism of $x$, which makes the following diagram of simplicial sets over $w^*w_!X$ commute: 
\[
\xymatrix{
\eta_!(x) \ar[d]_{\mathrm{id}_x} \ar[dr]^{\iota} & \\ 
x/w^*w_!X \ar[r]_-{\gamma} & L(x).
}
\]
The left vertical map is a covariant weak equivalence by Lemma 2.4 of \cite{leftfibrations}. We already proved that $\iota$ is a covariant weak equivalence. By two-out-of-three, $\gamma$ is therefore a weak equivalence as well.
\end{proof}

\section{Proof of Theorem \ref{thm:main}}
\label{sec:proof}

Let $\mathbf{A}$ be a fibrant simplicial category. As explained in the first section, Proposition \ref{prop:h!leftinv} proves that $\mathbf{L}h_!$ has a left quasi-inverse. To conclude that $(h_!,h^*)$ is a Quillen equivalence, it thus suffices to show that it has a right quasi-inverse as well. For this we need a lemma expressing the compatibility of $h_!$ with `change of base'. If $F: \mathbf{A} \rightarrow \mathbf{B}$ is a simplicial functor consider the square
\[
\xymatrix{
\mathbf{sSets}^{\mathbf{A}} \ar[d]_{F_!}\ar[r]^-{h_!^{\mathbf{A}}} & \mathbf{sSets}/w^*\mathbf{A} \ar[d]^{(w^*F)_!} \\
\mathbf{sSets}^{\mathbf{B}} \ar[r]_-{h_!^{\mathbf{B}}} & \mathbf{sSets}/w^*\mathbf{B},
}
\]
in which the horizontal functors are $h_!$ applied to simplicial functors on $\mathbf{A}$ and $\mathbf{B}$ respectively. This square, however, generally does \emph{not} commute up to natural isomorphism. Rather, it commutes up to weak equivalence in the following sense:

\begin{lemma}
\label{lem:h!basechange}
There is a natural transformation $H: (w^*F)_! \circ h_!^{\mathbf{A}} \rightarrow h_!^{\mathbf{B}} \circ F_!$, which is a covariant weak equivalence over $w^*\mathbf{B}$ when evaluated on projectively cofibrant objects.
\end{lemma}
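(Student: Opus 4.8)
The plan is to produce the natural transformation $H$ from a natural transformation between the underlying simplicial functors $h^{\mathbf{A}}\colon \mathbf{A}^{\mathrm{op}}\to \mathbf{sSets}/w^*\mathbf{A}$ and $h^{\mathbf{B}}\colon \mathbf{B}^{\mathrm{op}}\to \mathbf{sSets}/w^*\mathbf{B}$, and then to check that the induced map on left Kan extensions is a covariant weak equivalence objectwise on cofibrant objects by a skeletal-filtration reduction to representables.

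First I would construct $H$. The functor $F$ induces $w^*F\colon w^*\mathbf{A}\to w^*\mathbf{B}$, hence for each object $a$ of $\mathbf{A}$ a map of simplicial sets $a/w^*\mathbf{A}\to F(a)/w^*\mathbf{B}$ lying over $w^*\mathbf{A}\to w^*\mathbf{B}$; concretely an $(n{+}1)$-simplex of $w^*\mathbf{A}$ with $0$th vertex $a$ is pushed forward along $w^*F$ to an $(n{+}1)$-simplex of $w^*\mathbf{B}$ with $0$th vertex $F(a)$. One checks this is compatible with the simplicial enrichment, i.e.\ with the maps $h_{x,y}$ defined in Section~\ref{sec:Qpairs} (the only point requiring care is the $i=0,\,j>0$ component of $H_{x,y}(\xi)$, where one uses that $w^*F$ is compatible with the maps $\pi_j$ and $\mu$ of Section~\ref{sec:cubes}, which it is by naturality of those maps in the simplicial category). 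Thus we get a natural transformation $(w^*F)_!\circ h^{\mathbf{A}}\Rightarrow h^{\mathbf{B}}\circ F$ of functors $\mathbf{A}^{\mathrm{op}}\to \mathbf{sSets}/w^*\mathbf{B}$, and since $h_!^{\mathbf{A}}$, $h_!^{\mathbf{B}}$, $F_!$ and $(w^*F)_!$ are all simplicial left Kan extensions / left adjoints, passing to left Kan extensions along the Yoneda embedding yields the desired $H\colon (w^*F)_!\circ h_!^{\mathbf{A}}\Rightarrow h_!^{\mathbf{B}}\circ F_!$, natural in the diagram on $\mathbf{A}$.

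Next, to see $H$ is a covariant weak equivalence over $w^*\mathbf{B}$ on projectively cofibrant objects, I would argue as in the proofs of Propositions~\ref{prop:h!leftinv} and \ref{prop:h!rightinv}. All four composites $h_!^{\mathbf{B}}F_!$ and $(w^*F)_! h_!^{\mathbf{A}}$ are left Quillen (composites of left Quillen functors, using Propositions~\ref{prop:h!} and \ref{prop:invcovariant}-style base change for the covariant structure, or simply that $(w^*F)_!$ and $F_!$ preserve colimits and cofibrations), so the cube lemma together with an induction over the skeletal filtration of a cofibrant $G\in \mathbf{sSets}^{\mathbf{A}}$ reduces the claim to the generating cells $\mathbf{A}(a,-)\otimes \Delta^n$. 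Using the simplicial compatibility $h_!(G\otimes M)\simeq h_!(G)\otimes M$ and the fact that $-\otimes \Delta^n$ is a covariant trivial cofibration on the base point $-\otimes \Delta^0$, one further reduces to the representable $\mathbf{A}(a,-)$ itself (compare the square argument in the proof of Proposition~\ref{prop:h!leftinv}). Here $h_!^{\mathbf{A}}\mathbf{A}(a,-)=(a/w^*\mathbf{A}\to w^*\mathbf{A})$ and $F_!\mathbf{A}(a,-)=\mathbf{B}(F(a),-)$, so $h_!^{\mathbf{B}}F_!\mathbf{A}(a,-)=(F(a)/w^*\mathbf{B}\to w^*\mathbf{B})$, while $(w^*F)_!h_!^{\mathbf{A}}\mathbf{A}(a,-)$ is the image of $a/w^*\mathbf{A}$ under $(w^*F)_!$; the map $H$ on these is, by construction, the canonical map. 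By Lemma~2.4 of \cite{leftfibrations}, both $\Delta^0\xrightarrow{a} w^*\mathbf{A}$ and $\Delta^0\xrightarrow{F(a)} w^*\mathbf{B}$ are covariant weak equivalences onto the respective slice objects, and these vertices are matched up by $H$; since $(w^*F)_!$ preserves covariant weak equivalences between cofibrant objects, two-out-of-three in the resulting triangle gives that $H(\mathbf{A}(a,-))$ is a covariant weak equivalence.

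The main obstacle I anticipate is not the weak-equivalence argument, which is routine once the reductions are set up, but verifying that the objectwise maps $a/w^*\mathbf{A}\to F(a)/w^*\mathbf{B}$ assemble into a genuine \emph{simplicially enriched} natural transformation $h^{\mathbf{A}}\Rightarrow (w^*F)^*h^{\mathbf{B}}$ — i.e.\ checking commutativity of the enrichment squares involving the structure maps $h_{x,y}$. This amounts to unwinding the definition of $H_{x,y}(\xi)$ in Section~\ref{sec:Qpairs} and observing that every ingredient ($\overline{\xi}$, the maps $\pi_j$, $\mu$, and composition in the simplicial category) is carried by $w_!$ and $w^*F$ in a compatible way; it is bookkeeping rather than a conceptual difficulty, but it is where essentially all of the work of the lemma sits.
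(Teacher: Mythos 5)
Your proposal is correct and follows essentially the same route as the paper: define $H$ on representables via the map $a/w^*\mathbf{A}\to F(a)/w^*\mathbf{B}$ induced by $w^*F$, extend by colimits and the simplicial structure, then reduce the weak-equivalence claim by skeletal induction to $\mathbf{A}(a,-)$ and conclude by two-out-of-three from Lemma~2.4 of part~I applied to the vertices $\mathrm{id}_a$ and $\mathrm{id}_{F(a)}$. The enrichment-compatibility check you flag as the main work is exactly the point the paper dismisses as ``easily verified,'' so your assessment of where the content lies is accurate.
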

\begin{proof}
Since all functors involved preserve colimits and are compatible with the simplicial structure, it suffices to define $H$ on representable functors $\mathbf{A}(a,-)$ for objects $a \in \mathbf{A}$. Note that $(w^*F)_!h_!^{\mathbf{A}}\mathbf{A}(a,-)$ is the map of simplicial sets $a/w^*\mathbf{A} \rightarrow w^*\mathbf{B}$. Recall that the $n$-simplices of $a/w^*\mathbf{A}$ are $(n+1)$-simplices of $w^*\mathbf{A}$ whose first vertex is $a$; clearly the map of simplicial sets $w^*F$ then induces a map of simplicial sets over $w^*\mathbf{B}$ as follows:
\[
\xymatrix{
a/w^*\mathbf{A} \ar[rr]^{H(\mathbf{A}(a,-))} \ar[dr] && F(a)/w^*\mathbf{B} \ar[dl] \\ 
& w^*\mathbf{B}. &
}
\]
It is easily verified that this $H$ defines a natural transformation. To see that it is a covariant weak equivalence, the usual skeletal induction shows that it suffices to check this on functors of the form $\mathbf{A}(a,-) \otimes \Delta^n$ and the fact that the functors involved are compatible with the simplicial structure allows us to reduce further to the case $n=0$. Now consider the following diagram of simplicial sets over $w^*\mathbf{B}$:
\[
\xymatrix{
\Delta^0 \ar[d]_{\mathrm{id}_a}\ar[dr]^{\mathrm{id}_{F(a)}} & \\
a/w^*\mathbf{A} \ar[r] & F(a)/w^*\mathbf{B}.
}
\]
The vertical and slanted maps are covariant weak equivalences by Lemma 2.4 of \cite{leftfibrations}, so that $H(\mathbf{A}(a,-)$ is a covariant weak equivalence as well.
\end{proof}

To deduce the existence of a right quasi-inverse to $h_!$, choose a simplicial set $X$ and an equivalence of simplicial categories $F: w_!X \rightarrow \mathbf{A}$. (For example, one might simply take $X = w^*\mathbf{A}$ and take $F$ to be the counit $\varepsilon$.) Write $\overline{F}: X \rightarrow w^*\mathbf{A}$ for the adjoint map, which is a categorical equivalence of simplicial sets. Now consider the following sequence of natural isomorphisms:
\begin{eqnarray*}
\mathbf{L}h_! \circ \mathbf{L}F_! \circ \mathbf{L}r_! & \simeq & \mathbf{L}(w^*F)_! \circ \mathbf{L}h_! \circ \mathbf{L}r_! \\
& \simeq & \mathbf{L}(w^*F)_! \circ \mathbf{L}\eta_! \\
& \simeq & \mathbf{L}\overline{F}_!.
\end{eqnarray*}
Here the first isomorphism follows from Lemma \ref{lem:h!basechange}, the second from Proposition \ref{prop:h!rightinv} and the third one from the identification $\overline{F} = w^*F \circ \eta$. Since $\overline{F}$ is a categorical equivalence, Proposition \ref{prop:invcovariant} guarantees that $\mathbf{L}\overline{F}_!$ is an equivalence of categories. We conclude that $\mathbf{L}h_!$ admits a right quasi-inverse.

To complete the proof of Theorem \ref{thm:main} we should argue that for a simplicial set $X$ the adjoint pair 
\[
\xymatrix{
r_!: \mathbf{sSets}/X \ar@<.5ex>[r] & \mathbf{sSets}^{w_!X}: r^* \ar@<.5ex>[l]
}
\]
is a Quillen equivalence. Choose a simplicial category $\mathbf{A}$ and a categorical equivalence $\overline{F}: X \rightarrow w^*\mathbf{A}$. If we write $F: w_!X \rightarrow \mathbf{A}$ for the adjoint of this map, the sequence of natural isomorphisms above shows that $\mathbf{L}r_!$ admits a left quasi-inverse, which is itself an equivalence of categories. Thus $\mathbf{L}r_!$ is an equivalence of categories as well.

\bibliographystyle{plain}
\bibliography{biblio}

\end{document}